\documentclass[pdflatex,sn-mathphys-num]{sn-jnl}

\usepackage{graphicx}%
\usepackage{multirow}%
\usepackage{amsmath,amssymb,amsfonts}%
\usepackage{amsthm}%
\usepackage{mathrsfs}%
\usepackage[title]{appendix}%
\usepackage{xcolor}%
\usepackage{textcomp}%
\usepackage{manyfoot}%
\usepackage{booktabs}%
\usepackage{algorithm}%
\usepackage{algorithmicx}%
\usepackage{algpseudocode}%
\usepackage{listings}%

\usepackage{standalone}
\usepackage{tikz}
\usetikzlibrary{calc, shapes.misc,positioning, decorations.pathmorphing, arrows.meta}
\usepackage{subcaption}
\usepackage[compatibility=false]{caption}
\usepackage{bm}
\usepackage{bbm}

\theoremstyle{thmstyleone}%
\newtheorem{theorem}{Theorem}
\newtheorem{proposition}[theorem]{Proposition}%
\theoremstyle{thmstyletwo}%
\newtheorem{remark}{Remark}%
\theoremstyle{thmstylethree}%

\def\pathfig{.}
\DeclareMathAlphabet{\itbf}{OML}{cmm}{b}{it}
\DeclareMathAlphabet\mathbfcal{OMS}{cmsy}{b}{n}

\renewcommand{\hat}{\widehat}
\renewcommand{\tilde}{\widetilde}
\def\RR{\mathbb{R}}
\def\bx{{{\itbf x}}}
\def\bxi{\boldsymbol{\xi}}

\def\be{{\itbf e}}

\def\bQ{{\itbf Q}}

\def\bmu {{\boldsymbol{\mu}}}

\def\bphi{\boldsymbol{\phi}}

\def\be#1\ee{%
    \begin{equation}%
    #1%
    \end{equation}%
}

\def\bphi{{\boldsymbol{\varphi}}}

\def\12{{\frac{1}{2}}}

\def\gam{{\boldsymbol \gamma}}
\def\Beta{{\boldsymbol \beta}}

\newcommand{\CC}{{\mathbb{C}}}

\newcommand\hgam{\boldsymbol{\hat\gamma}}
\renewcommand\gam{\boldsymbol{\gamma}}
\renewcommand\bphi{\boldsymbol{\phi}}
\renewcommand{\bxi}{\boldsymbol \xi}

\newcommand{\Alpha}{\boldsymbol \alpha}
\renewcommand{\Beta}{\boldsymbol \beta}

\newcommand{\hKappa}{\hat{\boldsymbol \kappa}}

\renewcommand{\Xi}{\boldsymbol \xi}
\newcommand\blF{ {\cal F}}

\newcommand\blTF{ {\tilde {\cal F}}}
\newcommand\blC{{\cal C}}

\newcommand\blU{{\cal U}}

\newcommand{\blUi}[2]{\blU_{#1}^{#2}}

\DeclareFontFamily{U}{mathx}{\hyphenchar\font45}
\DeclareFontShape{U}{mathx}{m}{n}{
 <5> <6> <7> <8> <9> <10>
 <10.95> <12> <14.4> <17.28> <20.74> <24.88>
 mathx10
 }{}
\DeclareSymbolFont{mathx}{U}{mathx}{m}{n}
\DeclareFontSubstitution{U}{mathx}{m}{n}
\DeclareMathAccent{\widecheck}{0}{mathx}{"71}
\DeclareMathAccent{\wideparen}{0}{mathx}{"75}

\def\clr{\color{black}}

\begin{document}

\title[Lanczos for continuous spectra]{Adapting the Lanczos algorithm to matrices with almost continuous spectra}


\author*[1]{\fnm{J\"orn} \sur{Zimmerling}}\email{jorn.zimmerling@it.uu.se}

\author[2,3]{\fnm{Vladimir} \sur{Druskin}}\email{vdruskin@gmail.com}
\equalcont{These authors contributed equally to this work.}

\affil[1]{\orgdiv{Institutionen f\"or informationsteknologi}, \orgname{Uppsala Universitet}, \orgaddress{\street{Regementsv\"agen 10}, \city{Uppsala}, \postcode{751 20}, \state{Uppsala}, \country{Sweden}}}

\affil[2]{\orgdiv{Department of Mathematical Sciences}, \orgname{Worcester Polytechnic Institute}, \orgaddress{\street{100 Institute Road}, \city{Worcester}, \postcode{01609}, \state{MA}, \country{USA}}}

\affil[3]{\orgdiv{Department of Mathematics}, \orgname{Southern Methodist University}, \orgaddress{\street{3100 Dyer St, Clements Hall Room 208}, \city{Dallas}, \postcode{75205}, \state{TX}, \country{USA}}}


\abstract{\unboldmath
We consider the approximation of $B^T (A+sI)^{-1} B$, where
$A\in\mathbb{R}^{n\times n}$ is large and symmetric positive definite and
$B\in\mathbb{R}^{n\times p}$ with $p\ll n$. {\clr We are interested in the case
where $A$ has an almost continuous (dense) spectrum, i.e. its eigenvalues fill one or
more intervals so densely that a Krylov method cannot resolve them
individually.} Our target application is the computation of multiple-input multiple-output transfer functions arising from large-scale discretizations of problems with continuous spectral measures, such as linear time-invariant PDEs on unbounded domains.  Traditional Krylov methods, such as Lanczos or conjugate gradients, focus on resolving individual eigenvalues of a dense discretization, while ignoring the underlying continuous spectral measure that these points approximate. We argue that it is more efficient to model the inherent branch cut of the original operator than to exhaustively resolve the artificial point spectrum induced by discretization. {\clr For that purpose we adapt the framework} of Kreĭn--Nudelman semi-infinite
strings to the block Lanczos algorithm, with parameters chosen adaptively by maximizing the energy absorbed at the string termination relative to the energy stored in the string.  This approach results in a low-rank modification to the (block) Lanczos matrix, dependent on $\sqrt{s}$, with an additional $O(n)$ cost.   We demonstrate significant error reductions for large-scale self-adjoint PDE discretizations in unbounded domains, including two- and three-dimensional Maxwell's equations in diffusive regimes. The method proves particularly advantageous in computing state-space solutions for wave propagation, specifically for 2D wave and 3D Maxwell operators.  Implicitly replacing the conventional Lanczos spectral decomposition with a representation in terms of the continuous Kreĭn--Nudelman spectrum, we obtain a qualitative improvement in finite-difference approximations, effectively transforming standing-wave artifacts into outgoing propagating waves.
}

\keywords{Block Lanczos, acceleration, quadrature, Kreĭn--Nudelman, square-root terminator, unbounded domains, absorbing boundary conditions}

\pacs[MSC Classification]{65F10, 65N22, 65F50, 41A20, 30B70, 47B36, 78M20}

\maketitle

\section{Introduction}

\subsection{Problem statement}

Let $A=A^T\in\RR^{n\times n}$ be a symmetric positive definite (s.p.d.) matrix and let $B$ be a tall matrix with $B\in\RR^{n\times p}$, with $p\ll n$. We refer to $p$ as the size of the block. We target the approximation of  the square multi-input multi-output (MIMO) transfer function
\be \label{eq:prob1}
\blF(s) =B^T(A+sI)^{-1}B,
\ee
for $s\in\CC\setminus (-\infty, 0)$ outside the spectrum of $A$.  Generally, $\blF(s)\in\CC^{p\times p}$ is a complex symmetric matrix-valued function and it becomes s.p.d. for real positive $s$.
We compute the matrix  form~\eqref{eq:prob1}
via the block Krylov subspace 
\[
\mathcal{K}_m(A,B):= \operatorname{blkspan}\{ B,\, AB,\, A^2B,\, \dots,\, A^{m-1}B \},
\]
where $ \rm blkspan$ means that the whole space range $([\, B, AB, A^2 B , \dots, A^{m-1} B\,])$ is generated.
To simplify notation, without loss of generality, we assume that the matrix $B$ has orthonormal columns.
Conventionally, the (block) Krylov subspace approximations $\blF_m$ of matrix  forms~\eqref{eq:prob1} are computed via (block) Gau{\ss}ian quadratures, e.g., \cite{GM10,RRT16,lot2013}. The efficiency of such an approach follows from the shifted energy norm optimality of the Lanczos approximations for $s\in\RR_+$ and  near-optimality for  $f(A)B$ for general $f$ \cite{amsel2023nearoptimal}  yielding adaptation to coarse nonuniform spectral distributions; however, this property weakens on intervals of dense uniform spectra. An error improvement of about one order of magnitude for a given $m$ was obtained in \cite{zimmerling2025monotonicity} by averaging block Gau{\ss} and block Gau{\ss}--Radau quadratures. Combinations of such quadratures were used for tight two-sided bounds \cite{lot2008,Meurant2023,zimmerling2025monotonicity,Lun18}, such that averaging naturally leads to a more accurate estimate.

{\clr
Throughout we work in the three-stage setting
\[
\text{operator } A_\infty \ \longrightarrow\ \text{discretization } A
\ \longrightarrow\ \text{Lanczos approximation},
\]
illustrated in Figure~\ref{fig:three_boxes}. Here $A_\infty$ is an operator with
continuous spectrum, either a PDE operator on the unbounded domain, or its
finite-difference analogue on an infinite grid, in which case $A_\infty$ is
bounded. Passing from $A_\infty$ to the matrix $A$ does two distinct things.
Fixing the step size restricts the spectral interval, but leaves the
spectrum continuous. Replacing the infinite exterior part of the grid by an
exponentially convergent finite grid, known as an optimal grid or
finite-difference Gau{\ss}ian quadrature \cite{idkGrids,Druskin2016}, then makes
the spectrum discrete. Only the second step destroys spectral continuity.

The operators we have in mind are self-adjoint, with a spectrum containing one or more intervals of continuous spectrum, possibly together with finitely many
isolated eigenvalues arising from inhomogeneities of the coefficients. We do not
consider compact operators, whose eigenvalues accumulate only at the origin,
nor operators with singular continuous spectrum.

Domain truncation replaces the continuous part of the spectrum of $A_\infty$ by dense eigenvalues of $A$. We call such a spectrum \emph{almost continuous},
or \emph{dense} for short, meaning that on the spectral intervals of interest
the eigenvalue spacing lies far below the resolution attainable in $m\ll n$
Lanczos steps. The Lanczos approximation then cannot separate individual
eigenvalues there and effectively sees only the underlying continuous spectral
measure.

Both discretization steps contribute to the error in approximating
$\blF_\infty$, the transfer function of $A_\infty$, and they behave differently
near the continuous spectrum. The step size fixes an interval of validity in
$s$, whereas the domain truncation controls the error at a given $s$ and, with
optimal grids, decays exponentially in the number of exterior points, with sharp
available bounds \cite{idkGrids,Druskin2016}. Since the Krylov approximation $\blF_m$ approximates $\blF$ rather than $\blF_\infty$, no Krylov
method can reduce the error below the level set by these two steps. We therefore
assume throughout that the grid has been chosen so that $\blF$ approximates
$\blF_\infty$ to the desired tolerance at the values of $s$ of interest. This floor is reached soonest for $s$ close to the continuous spectrum, where both the discretization error and the Krylov approximation error are largest, and it limits the acceleration.

\begin{figure}[ht]
\centering
\resizebox{\textwidth}{!}{%
\begin{tikzpicture}[
    box/.style={
        draw,
        rectangle,
        text width=0.27\textwidth,
        minimum height=1.5cm,
        align=center,
        inner sep=6pt
    },
    boxD/.style={
        draw,
        rectangle,
        text width=0.38\textwidth,
        minimum height=1.5cm,
        align=center,
        inner sep=6pt
    },
    arrow/.style={
        -{Stealth[scale=1.3]},
        very thick
    },
    wavyarrow/.style={
        {Stealth[scale=1.3]}-{Stealth[scale=1.3]},
        thick,
        decorate,
        decoration={snake, amplitude=1.5pt, segment length=6pt, pre length=4pt, post length=4pt}
    }
]
\node[box] (box1) {Operator with continuous\\ spectrum $A_\infty$};
\node[box, right=0.02\textwidth of box1, yshift=-1.5cm] (box2) {Discretization:\\ Matrix $A \approx A_\infty$ with ``almost\\ continuous spectrum''};
\node[boxD, right=0.02\textwidth of box2, yshift=1.5cm] (box3) {Kre\u{\i}n--Nudelman damped Lanczos approximation of $A$ with continuous spectrum};
\draw[arrow] (box1.south) -- (box2.west);
\draw[arrow] (box2.east) -- (box3.south);
\draw[wavyarrow] (box3.west) -- (box1.east);

\end{tikzpicture}
}
\caption{\clr  Schematic of the three-stage workflow for computing PDE transfer functions.
Here we focus on the third stage as a stand-alone procedure. It exploits the spectral continuity of $A_\infty$ to construct reduced-order models (ROMs) for $A$ by collectively approximating clusters in its dense spectrum via continuous spectral distributions, thereby accelerating Lanczos approximations of the transfer functions of $A$. As a separate application to wave problems in unbounded domains, the same approach replaces standing waves---artifacts of nonabsorbing domain truncation---with outgoing waves, yielding a qualitatively more faithful approximation to the solution associated with
$A_\infty$. The full realization of the unified three-stage pipeline, culminating in computations straight for $A_\infty$, remains an active direction of ongoing and future work.}
\label{fig:three_boxes}
\end{figure}

Gau{\ss} and Gau{\ss}--Radau quadratures associated with $\mathcal{K}_m(A,B)$ produce symmetric nonnegative-definite Krylov projection matrices (positive definite in the Gau{\ss} case) matching, respectively, $2m$ and $2m-1$ Stieltjes matrix moments of the matrix-valued spectral measure of $A$ associated with $B$\footnote{\clr Gau{\ss}--Radau quadrature fixes one prescribed node, at either end of the spectrum, at the cost of one matched moment. For discretizations of pure diffusion operators the lowest eigenvalue is set by the domain size and approaches $0$ as the domain grows; for operators with an exact null space, like the Maxwell operator, $0$ is already an eigenvalue of $A$. Either way, we place the prescribed Gau{\ss}--Radau node there.}.
{\clr Here the matrix-valued spectral measure $\boldsymbol\mu$ associated with the pair
$(A,B)$ is supported on the eigenvalues $\lambda_j$ of $A$ with matrix weights
$(B^Tv_j)(B^Tv_j)^T$, where the $v_j$ are the corresponding orthonormal
eigenvectors, so that
\be\label{eq:measmom}
\blF(s)=\int\frac{d {\boldsymbol\mu}(\lambda)}{\lambda+s},
\qquad
{\boldsymbol \xi}_k:=\int\lambda^k\,d{\boldsymbol\mu}(\lambda)=B^TA^kB,\quad k=0,1,2,\dots \ .
\ee
The ${\boldsymbol \xi}_k\in\RR^{p\times p}$ are the Stieltjes matrix moments and matching $2m$ of
them is exactly what $m$ block Lanczos steps achieve, since ${\boldsymbol \xi}_k$ is determined by $\mathcal{K}_m(A,B)$ for $k\le 2m-1$. The nodes of $\boldsymbol\mu$ are fixed by $A$ alone, but its weights depend on how $B$ is distributed over the eigenvectors, and it is this measure, rather than the spectrum of $A$, that the quadratures approximate.}

{\clr The Ritz values associated with the Gau{\ss} and Gau{\ss}--Radau 
constructions define two matrix-valued quadratures on the real line,
and averaging them improves the approximation by effectively doubling the number
of quadrature nodes~\cite{zimmerling2025monotonicity}. The transfer function they produce is rational (a Pad\'e-type approximant) with all its poles at the negative Ritz values $-\theta_i$, i.e.\ on the spectral interval of $-A$ where the continuous spectrum of $A_\infty$ also lies. Such an approximation can reproduce a continuous measure only by resolving the individual eigenvalues underneath it. This is efficient when the spectrum is coarse and nonuniform, which is the regime in which Lanczos is classically near-optimal, but on an almost continuous spectrum the eigenvalue
spacing is far below what $m\ll n$ steps can resolve, each additional Ritz value
accounts for a vanishing share of the measure, and convergence degrades to the
linear rate observed in section~\ref{sec:NumEx}. The effort is moreover spent on
eigenvalues that are themselves artifacts of the domain truncation rather than
features of $A_\infty$. This suggests instead building spectral continuity
directly into the Lanczos approximation, so that dense clusters are represented
collectively rather than resolved one at a time.}

In this work we develop such approximations using extensions of the Stieltjes
string theory developed by Mark Kreĭn and his school. The derivation of the
block Gau{\ss}--Radau formulas in \cite{zimmerling2025monotonicity} rested on the connection between the block Lanczos algorithm and the block extension of
discrete Stieltjes strings and block Stieltjes continued fractions. The scalar
variant of this connection goes back to Kreĭn in the 1950s, see, e.g.,
\cite{Krein1967,Krein1952,Krein1947}. The Lanczos
recursion gives a string of $m$ point masses, and the Gau{\ss} and
Gau{\ss}--Radau rules correspond to terminating it by a Dirichlet or a Neumann
condition. Both terminations are non-absorbing, i.e. a wave reaching the end of the string is reflected. Kreĭn and Nudelman \cite{KreinNudelman1989} extended
the construction to problems with continuous spectra by attaching a damper at
the end of the string, while still preserving $2m$ (or $2m-1$, depending on the
realization) Stieltjes moments.

{\clr Here we adapt this construction to the block Lanczos case. The damper is a
low-rank, $\sqrt{s}$-dependent modification of the last block of the Lanczos
matrix, so the spectral moments matched by the conventional block Lanczos method
are retained, while the poles of the approximation move off the spectral
interval onto the second Riemann sheet of $\sqrt{s}$. Poorly converged Ritz
values are displaced furthest, so that dense spectral regions are represented by
a smooth measure rather than resolved eigenvalue by eigenvalue. The damper is
chosen by maximizing the energy dissipated at the truncation relative to the
energy stored in the string, which is the quantity governing how far the poles
are displaced and hence the smoothness of the spectral measure. This is
illustrated schematically in Figure~\ref{fig:TikzDrawing}.}

{\it We apply this approach mainly to accelerate the computation of the transfer
function of $A$.} However, we also touch on the underlying problem for
$A_\infty$: for the state representation of wave propagation problems in
unbounded domains, the same modification removes the artifacts of the
non-absorbing finite-difference approximation in the exterior, transforming
standing waves into outgoing ones that propagate along d'Alembert
characteristics. 

\begin{figure}[htbp]
    \centering
    \begin{tikzpicture}[
    pale_pink/.style={fill=red!15},
    pale_blue/.style={fill=blue!15},
    cross/.style={
        cross out,
        draw=black,
        very thick,
        minimum size=10pt,
        inner sep=0pt,
        outer sep=0pt
    },
    blue_cross/.style={
        cross,
        draw=blue
    },
    dashed_arrow/.style={
        -{Stealth[length=3mm]},
        dashed,
        thick
    }
]

\def\xmin{-7}
\def\xmax{7}
\def\ymin{-5}
\def\ymax{5}

\fill[pale_pink] (\xmin, 0) rectangle (0, \ymax);
\fill[pale_pink] (\xmin, \ymin) rectangle (0, 0);
\fill[pale_blue] (0, 0) rectangle (\xmax, \ymax);
\fill[pale_blue] (0, \ymin) rectangle (\xmax, 0);

\draw[-{Stealth[length=3mm]}, thick] (\xmin, 0) -- (\xmax, 0) node[anchor=north east] {\large Re $\sqrt{s}$};
\draw[-{Stealth[length=3mm]}, thick] (0, \ymin) -- (0, \ymax) node[anchor=north west] {\large Im $\sqrt{s}$};



\node[draw, rounded corners, inner sep=5pt, fill=white, align=center] at (-4.5, 4.2) {\large Second Riemann sheet\\ ($\operatorname{Re}( \sqrt{s})<0$)};
\node[draw, rounded corners, inner sep=5pt, fill=white, align=center] at (4.5, 4.2) {\large First Riemann sheet\\ ($\operatorname{Re}(\sqrt{s})>0$)};

\draw[dashed, ultra thick] (0, \ymin) -- (0, \ymax);
\node[rotate=90, anchor=north, inner sep=10pt] at (0, 2) {\large Branch cut};

\def\xpole{-3.5}
\def\xpolemid{-4.0} 
\def\yhi{3.0}
\def\ymid{2.0}      
\def\ylo{1.2}


\node[blue_cross] (b1) at (0, \yhi) {};
\node[cross] (k1) at (\xpole, \yhi) {};
\draw[dashed_arrow] (b1) -- (k1);

\node[blue_cross] (b2) at (0, -\yhi) {};
\node[cross] (k2) at (\xpole, -\yhi) {};
\draw[dashed_arrow] (b2) -- (k2);

\node[blue_cross] (b3) at (0, \ymid) {};
\node[cross] (k3) at (\xpolemid, \ymid) {};
\draw[dashed_arrow] (b3) -- (k3);

\node[blue_cross] (b4) at (0, -\ymid) {};
\node[cross] (k4) at (\xpolemid, -\ymid) {};
\draw[dashed_arrow] (b4) -- (k4);

\node[blue_cross] (b5) at (0, \ylo) {};
\node[cross] (k5) at (\xpole, \ylo) {};
\draw[dashed_arrow] (b5) -- (k5);

\node[blue_cross] (b6) at (0, -\ylo) {};
\node[cross] (k6) at (\xpole, -\ylo) {};
\draw[dashed_arrow] (b6) -- (k6);

\node[inner sep=10pt,align=center] at (4, 0) {\large Gau\ss\ poles on branch cut\\ \phantom{xx}};

\node[text width=6cm, align=center] at (-4, 0) {\large Kreĭn -- Nudelman poles\\ on second Riemann sheet};

\draw[thin, black] (\xmin, \ymin) rectangle (\xmax, \ymax);

\end{tikzpicture}
    \caption{\clr The two Riemann sheets of $z=\sqrt{s}$ are shown. The poles of the Gau{\ss} approximant lie on the imaginary axis, i.e.\ the spectral interval of the
operator $-A$, they are its Ritz values. The Kre\u{\i}n--Nudelman approach moves
these poles to the second Riemann sheet, enlarging the domain of analyticity and
improving convergence.}
    \label{fig:TikzDrawing}
\end{figure}

The remainder of the article is organized as follows: we review the basic properties of the block Gau{\ss} quadrature computed via the block Lanczos algorithm, its connection to Stieltjes strings and representation via truncated Stieltjes-matrix continued fraction in section~\ref{sec:BlLanc}. In section~\ref{sec:KreinNudelman1} we extend block Stieltjes strings/continued fractions to, respectively, Kreĭn--Nudelman strings/continued fractions. We further map back the Kreĭn--Nudelman extension in terms of the Lanczos block tridiagonal matrix. Then we discuss the adaptive choice of the Kre\u{\i}n--Nudelman parameters,
showing that Gau{\ss} and Gau{\ss}--Radau quadratures are two limiting cases
of the Kre\u{\i}n--Nudelman extension, and derive a two-sided error bound for
the latter for $s\in\mathbb{R}^+$. In section~\ref{sec:HermitePade} we develop
an adaptive spectral optimization algorithm to determine the
Kre\u{\i}n--Nudelman parameters based on maximization of the dissipated energy
relative to the stored energy of the Kre\u{\i}n--Nudelman string along the negative real axis. The dissipated energy is related to the density of the
spectral measure of the approximant, so this maximization is equivalently an
optimization of the regularity of that density: the poles of the Gau{\ss}
approximant on $\mathbb{R}^-$ are replaced by a smooth density of the same
total mass.}

\subsection{Notation}\label{sec:note}
Given two square s.p.d. matrices $G_1, G_2$ we use the notation
$G_1<G_2$ to mean that
the matrix $G_2-G_1$ is positive definite (i.e., Löwner ordering). 
A sequence of matrices $\{G_m\}_{m\ge 0}$ is said to be monotonically
increasing (resp. decreasing) if $G_m < G_{m+1}$ (resp. $G_{m+1}<G_m$)
for all $m$.
Definite $p \times p$ matrices are denoted by Greek 
letters $\Alpha,\Beta,\gam$ and matrix-valued functions by calligraphic capital letters such as $\blC(s)$ or $\blF(s)$. 
For $\Alpha, \Beta \in \mathbb{R}^{p\times p}$ and $\Beta$ is nonsingular, we use the notation $\frac{\Alpha}{\Beta}:= \Alpha \Beta^{-1}$ to denote right inversion. We write $\tfrac{1}{A}$ in place of $\tfrac{I_p}{A}$ whenever no ambiguity arises, in order to improve readability, especially in continued fractions.
The matrix $E_k \in \mathbb{R}^{mp\times p}$ has zero elements except for the
$p\times p$ identity matrix in the $k$-th block, $E_k =[0,\ldots, 0,I, \ldots, 0]^T$. Finally, the imaginary unit is denoted as $\imath$ to distinguish it from indices $i,j,k$.

\section{Block Gau{\ss}ian Quadratures, Stieltjes strings and continued fractions}\label{sec:BlLanc}

 {\begin{center}
\begin{minipage}{.65\linewidth}
 \begin{algorithm}[H]
\caption{Block Lanczos iteration}\label{alg:blockLanc}
\begin{algorithmic}
\normalsize
\State Given $m$, $A\in{\mathbb R}^{n\times n}$ s.p.d., 
$B\in{\mathbb R}^{n\times p}$ with orthonormal columns 
\State $Q_1 =B$ \Comment{$\Beta_{1}=I_p$ as $B^TB=I$}
\State $W = AQ_1$
\State $\Alpha_1 = Q_1^T W$
\State $W = W - Q_1 \Alpha_1$
\For{$i= 2,\dots, m$} 
 	\State $Q_i\Beta_{i}=W$ \Comment{QR decomposition of $W$}
	\State $W = AQ_i- Q_{i-1}\Beta_{i}^T$
 	\State $\Alpha_i = Q_i^T W$
 	\State $W = W - Q_i \Alpha_i$
\EndFor 
\end{algorithmic}
 \end{algorithm}
\end{minipage}
\end{center}}
\vspace{0.5cm}
Let us assume that we can perform $m$ steps, with $mp \le n$, of the block Lanczos iteration (Algorithm~\ref{alg:blockLanc}) without breakdowns 
or deflation \cite{OLeary1980,GOLUB1977}. As a result, 
the orthonormal block vectors $Q_i\in \mathbb{R}^{n \times p}$ form 
the matrix ${\boldsymbol Q}_m=[Q_1, \ldots, Q_m]\in \mathbb{R}^{n\times mp}$, whose columns
contain an orthonormal basis for the block Krylov subspace
\[\mathcal{K}_m(A,B):= \operatorname{blkspan}\{ B,\, AB,\, A^2B,\, \dots,\, A^{m-1}B \}.\]
{In addition, let us assume that after $m$ steps one can compute the QR decomposition  $Q_{m+1}\Beta_{m+1}=W$ with full rank, where $W$ is computed on the $m$-th step. 
The Lanczos recursion (with $m$ multiplications by $A$)} can then be compactly written as
\be\label{eq:LancRel}
A{\boldsymbol Q}_m={\boldsymbol Q}_m T_m + Q_{m+1} \Beta_{m+1}E_m^T, 
\ee
where $T_m$ is the s.p.d. block tridiagonal matrix 
\be\label{eq:T}
T_m=
\begin{pmatrix}
\Alpha_1 	& \Beta_2^T 	& {}		&{}			& {}& {}& {}\\
\Beta_2 	& \Alpha_2	& \Beta_3^T	&{}			& {}& {}& {}\\
{}			& \ddots 	& \ddots 	& \ddots 	& {}& {}& {}\\
{}			& {}		& \Beta_{i}& \Alpha_i & \Beta_{i+1}^T & {}& {}\\
{}			& {}		&{}			& \ddots 	& \ddots 	& \ddots& {}\\
{}			& {}		&{}			& {}	& \Beta_{m-1}& \Alpha_{m-1}& \Beta_m^T\\
{}			& {}		&{}			& {} 	& {} 	& \Beta_m& \Alpha_m\\
\end{pmatrix} ,
\ee
and $\Alpha_i, \Beta_i \in\mathbb{R}^{p \times p}$ are the block coefficients
in Algorithm~\ref{alg:blockLanc}. 

Using the Lanczos decomposition, $\blF(s)$ can be approximated as
\be\label{eq:blapprox}
 \blF(s)\approx \blF_m(s)= E_1^T(T_m+sI)^{-1}E_1.
\ee
This approximation is known as a block Gau{\ss} quadrature \cite{GM10}. {\clr Such a quadrature interpretation is closely related to the classical Stieltjes moment problem and to continued-fraction representations of
Stieltjes functions.  In the scalar case, the $m$-step Lanczos recursion
generates the Jacobi matrix $T_m$, whose eigenvalues are the nodes of the
$m$-point Gaussian quadrature rule, while the squared first components of
the corresponding eigenvectors give the quadrature weights.  The resulting
quadrature is exact for polynomials of degree up to $2m-1$, or,
equivalently, it matches the first $2m$ moments of the underlying spectral
measure.  For the Stieltjes transform, this same construction yields a $[m-1,m]$ rational approximation, which is a Pad\'e approximation at infinity and can be represented by a finite Stieltjes continued fraction. The coefficients of this continued fraction are precisely the parameters of the Stieltjes--Kreĭn string that we describe below and are a key part of the proposed approach.  This connection between Gau{\ss}ian quadrature, moment matching, Jacobi matrices, Stieltjes--Kreĭn string and Stieltjes continued fractions is classical; see, in particular, Supplement~II in Gantmacher and Kreĭn~\cite{GantmacherKrein2002} and
\cite{LiesenStrakos2012}.  The block construction above is the
matrix-valued analogue: $T_m$ is block Jacobi, the scalar spectral measure
is replaced by the matrix-valued measure associated with $B$, and
\eqref{eq:blapprox} matches the $2m$ matrix moments
$\{B^T A^i B\}_{i=0}^{2m-1}$  (see e.g. \cite{zimmerling2025monotonicity} for definitions).

It is well known that the simple Lanczos algorithm, which relies solely on three-term recurrences without reorthogonalization, is numerically unstable due to floating-point round-off errors. This instability manifests as a loss of orthogonality among the Lanczos vectors and the emergence of spurious copies of the Lanczos eigenvalues. However, this instability merely slows the convergence of the transfer function to some degree; it does not compromise the accuracy of the final converged result \cite{Knizhnerman1996TheSL,Greenbaum1989}. Equivalent theoretical guarantees for the block Lanczos algorithm, however, are currently unknown. Furthermore, for very large-scale problems ($N > 10^7$), we observed unexplained deviations from scalar Paige's theory, which serves as the foundation for the aforementioned convergence results. In these extreme cases, the stability of the final result is lost, necessitating the use of local reorthogonalization.}

To interpret the system as a block extension of Stieltjes-Kreĭn  strings, we apply a block $LDL^T$ decomposition to $T_m$ (as defined in equation~\eqref{eq:T}), following the approach established in \cite{zimmerling2025monotonicity}. This results in
\begin{equation}\label{eq:defT}
T_{m} :=(\widehat{K}_{m}^{-1})^T {J}_{m} \Gamma^{-1}_m {J}_{m}^T \widehat{K}_{m}^{-1}
\end{equation} 
where $\hKappa_{i}\in\RR^{p\times p}$, $\hKappa_1=I_p$ and $\gam_{i}\in\RR^{p\times p}$ are all full rank, 
\[
{J}_{m}^T = 
\begin{bmatrix}
I_p & -I_p & ~ & ~ \\
~ & \ddots& \ddots & ~ \\
 ~& ~& \ddots & -I_p \\
 ~ & ~&~& I_p 
\end{bmatrix}\in\RR^{pm\times pm},
	\begin{array}{ll}
\widehat{K}_{m}&={\rm blkdiag}(\hKappa_{1},\dots,\hKappa_{m})\\
{\Gamma}_m&={\rm blkdiag}(\gam_{1},\dots,\gam_{m}) ,
\end{array}
\]
and
$\Alpha_1=(\hKappa_{1}^{-1})^T\gam_1^{-1}\hKappa_{1}^{-1}=\gam_1^{-1}$, 
$\Alpha_i=(\hKappa_{i}^{-1})^T(\gam_{i-1}^{-1}+\gam_{i}^{-1})\hKappa_{i}^{-1}$ and 
$\Beta_i= { -(\hKappa_{i}^{-1})^T \gam_{i-1}^{-1} \hKappa_{i-1}^{-1}}$ for $i=2,\ldots,m$. 

The matrices $\gam_i$ and $\hKappa_{i}$ can be computed directly during the block Lanczos recursion using the coefficients $\Alpha_i$'s and $\Beta_i>0$'s using Algorithm~\ref{alg:ExtractGam}, given in Appendix~\ref{ap:A}, and previously derived for a different parametrization in \cite{ZaslavskySfraction}.

We first convert $T_m$ to pencil form using the $LDL^T$ factors introduced in decomposition~\eqref{eq:defT}. To this end, we introduce the matrices $\hgam_j$
\be\label{eq:decompose}
\hgam_j =\hKappa_j^T \hKappa_j, \quad j=1,\ldots,m.
\ee
The matrices $\gam_j$ and $\hgam_j$ are known as the Stieltjes parameters, and they are both s.p.d. if block Lanczos runs without breakdown. While $\hKappa_j$'s depend on the choice of block orthogonalization used in the block Lanczos algorithm, the Stieltjes parameters do not.

Let $ Z_m:= {J}_m {\Gamma}_m^{-1} {J}_{m}^T$ and $\widehat{ 
 \Gamma}_m={\rm blkdiag}(\hgam_{1},\dots,\hgam_{m})$.
Then, due to $B^TB=I_p$, the initial parameter $\hgam_1=I_p$ and the approximation $\blF_m$ of the transfer-function ${\cal F}=B^T (A+sI)^{-1}B$ can be expressed using the pencil $(Z_m,\widehat{\Gamma}_m)$ as
\be\label{eq:Gauss}
\blF_m(s)=E_1^T(T_m+sI)^{-1}E_1= E_1^T( Z_m+s \widehat{ 
 \Gamma}_m)^{-1}E_1.
\ee

Thus 
$\blF_m(s)$ corresponds to the first $p\times p$
block of the solution ${ U}_m\in \mathbb{C}^{mp\times p}$ of the linear system
\be\label{eq:LAform}
( Z_m+s \widehat{\Gamma}_m){ U}_m =E_1,
\ee
with block column vector ${ U}_m = [\blU_1; \ldots; \blU_m]$, with $\blU_i$ a $p\times p$ block.

By augmenting ${U}_m$ with the ``boundary" blocks $\blU_0, \blU_{m+1} \in \mathbb{C}^{p \times p}$ and introducing the fictitious term $\gam_0 = I_p$, we can rewrite \eqref{eq:LAform} as a block (finite-difference) Stieltjes string
\begin{eqnarray}
\frac 1 \gam_0\left(\blU_1 - \blU_0\right) &=&-I_p \label{eqn:line1}\\
\frac 1 {\gam_{i-1}}\left(\blU_{i} -\blU_{i-1}\right) 
 - \frac 1 {\gam_{i}} \left(\blU_{i+1}-\blU_{i}\right)+s\hat\gam_i \blU_i
 &=&\mathbf{0}, \quad
i=1,\ldots, m\label{eq:linei}\\
\blU_{m +1}&=& \mathbf{0}. \label{eqn:linem}
\end{eqnarray}
where the first line can be interpreted as a block Neumann condition and the last as a block Dirichlet condition.

For $p=1$ Kreĭn \cite{Krein1952} interpreted the system (\ref{eqn:line1}-\ref{eqn:linem}) as a so-called Stieltjes string
\be\label{eq:ss} -u_{xx}+s\mathbf{m}_m(x)u=0 \ee on a positive interval $[0,x_{m+1}]$, with boundary conditions $u_x(0)=-1$, $u(x_{m+1})=0$, and discrete mass distribution
\be\label{eq:mass} \mathbf{m}_m(x)=\sum_{i=1}^m\hgam_i\delta(x-x_i),\ee
where $x_1=0$, $x_{i+1}=x_i+\gam_i$, $i=1,\ldots, m$,
with $\blU_i=u(x_i)$ (see \cite{Zimmerling_KreinNudel} for a detailed derivation).
 The  Stieltjes-Kreĭn string interpretation is fundamental for the derivation of our algorithm.

The eigendecomposition of $T_m$ traditionally links $\blF_m(s)$ to block Gau{\ss}ian quadrature \cite{MeurantBook}; however, we interpret it here as the transfer function (Neumann-to-Dirichlet map) of the block Stieltjes string terminated by a block Dirichlet condition. Replacing the termination condition~\ref{eqn:linem} with a block Neumann condition, $\blU_{m+1}-\blU_{m}=\mathbf{0}$, introduces $p$ eigenvalues at $0$ and leads to a Gau{\ss}--Radau quadrature.

\begin{remark}\label{rem:oneLap}[Interpretation of the shift $s$ as Laplace Variable $s$ and $\sqrt{s}$]
The transition between diffusive and wave-type problems is handled through a reinterpretation of the shifted system with shift $s$ as Laplace variable. In the diffusion case, we consider operators of the form $(-\Delta + sI)$, where $s$ is the transform of the time derivative $\partial_t$ and $-\Delta$ the negative Laplacian. For the wave equation, $(-\Delta + \hat{s}^2 I)$, we define $\hat{s} = \sqrt{s}$ to map the second-order temporal derivative $\partial_{tt}$ into the same algebraic framework. This substitution allows us to interpret the same Stieltjes string representation from both perspectives, provided the spectral domain of interest is appropriately mapped.
\end{remark}

\section{Block Kreĭn--Nudelman extension}\label{sec:KreinNudelman1}
Extending this analogy to problems arising from differential equations on a semi-infinite interval $x \in [0,\infty)$, Kreĭn and Nudelman truncated the Stieltjes string using a finite-difference variant of the Sommerfeld radiation condition \cite{KreinNudelman1989}. We write such a condition for $p\ge 1$ in the form
\be\label{eq:sommerfeld}
({\sqrt{s}}{\bphi} ) \blU_{m+1}= - \frac 1 {\gam_{m}} (\blU_{m+1}-\blU_{m}),
\ee
replacing the equation~\eqref{eqn:linem} in the block Stieltjes string.{\renewcommand{\thefootnote}{:}
Here $\bphi\footnotemark[0] \in\RR^{p\times p}$ is some s.p.d. matrix parameter whose choice we discuss later.  

\footnotetext[0]{{\clr We use boldface $\bphi$ to distinguish it from the $p=1$ case.}}
}
Returning to the interpretation of \cite{KreinNudelman1989} for scalar $\phi$ and $p=1$, we extend the problem from the finite interval $[0, x_{m+1}]$ to the semi-infinite domain $[0, \infty)$. To do so, we first replace the mass distribution $\mathbf{m}_m(x)$ from equation~\eqref{eq:mass} with 
\be\label{eq:masssinf} 
\hat{\mathbf{m}}_m(x) = \mathbf{m}_m(x) + \phi^{2}\eta(x - x_{m+1}),
\ee
where $\eta(\cdot)$ denotes the Heaviside step function. This leads to the governing ODE 
\be\label{eq:ssKN} 
-u_{xx} + s\hat{\mathbf{m}}_m(x) u = 0, 
\ee
subject to the boundary condition $u_x(0) = -1$ and the condition at infinity $u(\infty) = 0$ for $s\in\CC\setminus (-\infty, 0)$. 

On the tail interval $[x_{m+1}, \infty)$, this parametrization yields outgoing wave solutions of the form 
$u(x) = \blU_{m+1} e^{-\sqrt{s} x \phi}$, {\clr with $\phi$ a positive scalar}. Consequently, the condition
\be\label{eq:sommerfeldE} 
\phi\sqrt{s} u|_{x=x_{m+1}} = -u_x|_{x=x_{m+1}}
\ee
truncates the semi-infinite domain back to $[0, x_{m+1}]$.  This expression is equivalent to the 1D Sommerfeld radiation condition, while the truncation in equation~\eqref{eq:sommerfeld} serves as its finite-difference approximation using a one-sided derivative.

We denote the resulting leading block of a block Stieltjes {\clr string} corresponding to Equations~(\ref{eqn:line1},\ref{eqn:linem},\ref{eq:sommerfeld})
\[\hat\blF_m^{\bphi}(s)= \blU_1^{\bphi}=u(0)\]
 and call $\hat \blF_m^{\bphi}(s)$ the Kreĭn--Nudelman quadrature.
 
\begin{remark}\label{rem:two}
  It is worth noting, that condition \eqref{eq:sommerfeldE}  is a simplified variant of the two-term  square-root terminator condition \cite{BeerPettifor1984Terminator}, which will be discussed in the conclusion. We chose \eqref{eq:sommerfeldE} for simplicity of incorporating it in the first-order port-Hamiltonian framework used for adaptive choice of $\phi$, see Appendix~\ref{sec:port}. However the two term square-root condition can be incorporated in the Kreĭn--Nudelman framework via discrete external mass distribution  $\hat {\mathbf{m}}_m{\clr (x)}=\mathbf{m}_m{\clr (x)}+\phi^2\sum_{i=0}^\infty h\delta(x_{m+1}+ih-x)$, replacing \eqref{eq:sommerfeldE} with $ \phi\sqrt{s+(0.5hs)^2}u|_{x=x_{m+1}}=-u_x|_{x=x_{m+1}}$, according to formula \eqref{eq:sqrt}.  It can be viewed as a counterpart of \eqref{eq:sommerfeldE} when the exterior domain with mass density $\phi^2$ is discretized using equidistant {\clr finite-difference} grid with step $h$  \cite{druskin2016near}, and   they coincide in the limiting case $h\to 0$.
  This two-term condition may yield a more accurate truncation with proper optimization of the two parameters --- this may be a subject of future research.
\end{remark}

\subsection{Kreĭn--Nudelman extension in terms of the tri-diagonal matrix and connection to Gau{\ss}--Radau quadratures}

In the general case $p\ge 1$ and
using equation~\eqref{eq:sommerfeld} we eliminate $\blU_{m+1}$ from the $m$-th equation of
the block Stieltjes system~\eqref{eq:linei} 
and obtain 
\[\frac{1} {\gam_{m-1}}\left(\blU_{m}^{\bphi} -\blU_{m-1}^{\bphi}\right) 
 - \frac 1 {\gam_{m}} \left(\left[\frac {1} {\gam_{m}}+{\sqrt{s}}{\bphi}\right]^{-1}\frac {1} {\gam_{m}}-I\right)\blU_{m}^{\bphi}+s\hat\gam_m \blU_m=0.
 \]
 Combining this with the remaining equations of the system~\eqref{eq:linei} and transforming the obtained matrix pencil, we obtain an expression for  $\hat\blF_m^{\bphi}(s)$ in terms of a block tridiagonal system
 \be\label{eq:KreinNudelman} 
 \hat\blF_m^{\bphi}(s)=E_1^T(\hat T_m^{\bphi}(s)+sI)^{-1}E_1.
 \ee
 Here $\hat T_m^{\bphi}(s)$ coincides with $T_m$ except for the last block diagonal element
\be\label{eq:hatal}\hat \Alpha_m^{\bphi}(s)=\Alpha_m-\delta \Alpha_m^{\bphi}(s), \quad  \delta \Alpha_m^{\bphi}(s)=(\hKappa_{m})^{-T}\gam_m^{-1}(\gam_m^{-1}+\sqrt{s}\bphi)^{-1}\gam_m^{-1}( \hKappa_{m})^{-1}.\ee
By construction, this yields a symmetric $\hat \Alpha_m$ and  $ \lim_{\bphi\to\infty} \hat\blF^{\bphi}_m(s)=\blF_m(s)$. The other limit corresponds to the Gau{\ss}--Radau quadrature $\blTF_{m}(s)$ given by $ \lim_{{\bphi}\to 0} \hat\blF_m^{{\bphi}}(s)=\blTF_m(s)$ as defined in \cite{zimmerling2025monotonicity}.


\begin{remark}\label{rem:pert}
  Realization of the Kre\u{\i}n--Nudelman extension as a perturbation of the last
  elements of the tridiagonal matrix has a practically important consequence.
  Assume for simplicity the simple Lanczos algorithm with $p=1$, and let the Ritz value
  $\theta_i\in\RR_+$ and vector $w_i\in\RR^m$, $\|w_i\|=1$, satisfy
   \be\label{eq:Ritz} T_mw_i-\theta_i w_i=0.\ee
  Since $\hat T^{\bphi}_m(s)$ depends on $s$, the poles of
  $\hat\blF^{\bphi}_m$ solve a nonlinear eigenvalue problem; to first order,
  however, its argument may be evaluated at the unperturbed pole $s=-\theta_i$.
  Provided $|\delta\Alpha^{\bphi}_m(-\theta_i)|$ is small compared with the gaps
  between the Ritz values, the displacement is then
  \be\label{eq:RitzShift}
  \delta\theta_i \approx -\,\delta\Alpha^{\bphi}_m(-\theta_i)
  \left(e_m^Tw_i\right)^{2},
  \ee
  where $e_m^Tw_i$ is the $m$-th component of $w_i$. Together with the residual
  bound $|\theta_i-\lambda'|\le\beta_{m+1}|e_m^Tw_i|$, in which $\lambda'$ is the
  eigenvalue of $A$ closest to $\theta_i$, this shows that a Ritz value which has
  not converged is displaced, whereas one with $|e_m^Tw_i|$ small is
  both accurate and left essentially in place. The perturbation thus acts
  selectively on the poorly resolved, densely spaced part of the spectrum, and
  since $\delta\Alpha^{\bphi}_m(-\theta_i)$ is complex it carries those Ritz
  values off the real axis onto the second Riemann sheet.
\end{remark}

\noindent In \cite{zimmerling2025monotonicity} it was shown that, as $m$ increases, the
block Gau{\ss} and block Gau{\ss}--Radau approximants increase and decrease,
respectively, in the L\"owner order, and satisfy the two-sided bound, for all
$s\in\RR_+$,
 \be\label{bound:GR} \ldots < \blF_{m-1}(s)<\blF_{m}(s)\le \blF(s)\le\blTF_{m}(s)< \blTF_{m-1}{\clr (s)}<\ldots\,. \ee
The following proposition extends this bound to the Kreĭn--Nudelman quadrature and shows that similar to Gau{\ss} and Gau{\ss}--Radau quadrature, it is a Stieltjes function; however, unlike those, it is not meromorphic. 

\begin{proposition}\label{prop:main}
For $0< {\bphi} <\infty $ 
\begin{enumerate}
\item $\hat \blF^{{\bphi}}_m(s)$ is a Stieltjes function with the branch cut on $\RR_-$;
\item $\forall s\in\RR_+$ we obtain the two sided bound
\be\label{in:KN} \blF_{m}(s) < \hat \blF^{{\bphi}}_m(s)<\blTF_{m}(s).\ee
\end{enumerate}
\end{proposition}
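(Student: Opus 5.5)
Both claims follow from a continued-fraction (Neumann-to-Dirichlet) representation of $\hat\blF^{\bphi}_m$, built from the termination \eqref{eq:sommerfeld} inward.

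\textbf{The impedance recursion.} At the right end, \eqref{eq:sommerfeld} gives
\[
\blU_{m+1} = -\frac{1}{\sqrt{s}\bphi}\,\frac{1}{\gam_m}\,(\blU_{m+1}-\blU_m).
\]
Eliminating $\blU_{m+1}$, the ratio of $\blU_m$ to the flux $-\gam_m^{-1}(\blU_{m+1}-\blU_m)$ is the impedance
\[
\blC_{m+1}(s)=\gam_m+(\sqrt{s}\bphi)^{-1}.
\]
Running the string equations \eqref{eq:linei} backward from $i=m$ to $i=1$ then gives
\[
\blC_i(s)=\gam_{i-1}+\bigl(s\hgam_i+\blC_{i+1}(s)^{-1}\bigr)^{-1}, \qquad i=m,\ldots,1.
\]
The normalization \eqref{eqn:line1} gives $\hat\blF^{\bphi}_m=\blU_1=(s\hgam_1+\blC_2^{-1})^{-1}$.

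The Gau{\ss} and Gau{\ss}--Radau cases use the same recursion with terminal impedance $\gam_m$ (Dirichlet) and $\infty$ (Neumann), respectively. For $s>0$ the Kreĭn--Nudelman terminal value satisfies
\[
\gam_m<\gam_m+(\sqrt{s}\bphi)^{-1}<\infty ,
\]
where the last inequality is understood via inverses: $0<\blC_{m+1}^{-1}<\gam_m^{-1}$.

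\textbf{Item 2: the two-sided bound.} Each step of the recursion is a composition of Löwner-monotone maps on s.p.d. matrices: $X\mapsto X^{-1}$ is order-reversing, while $X\mapsto X+\text{s.p.d.}$ and $X\mapsto s\hgam_i+X$ are order-preserving. The step $\blC_{i+1}\mapsto\blC_i$ inverts twice, so it is order-preserving and preserves strict inequalities. The final map $\blC_2\mapsto\hat\blF$ is also order-preserving. Propagating the strict terminal inequalities through $m$ steps therefore yields $\blF_m(s)<\hat\blF^{\bphi}_m(s)<\blTF_m(s)$.

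The Gau{\ss}--Radau endpoint needs care, since its terminal condition gives $\blC_{m+1}^{-1}=0$, i.e. $\blC_{m+1}=\infty$. I would handle this by running the recursion in the admittance variable $Y_i=\blC_i^{-1}$, where the Radau start $Y=0$ is legitimate. A cleaner alternative is to use the matrix form \eqref{eq:KreinNudelman}. There, $\delta\Alpha^{\bphi}_m(s)$ lies strictly between $0$ (Gau{\ss}) and $(\hKappa_m)^{-T}\gam_m^{-1}(\hKappa_m)^{-1}$ (Radau, the $\bphi\to0$ limit). The matrix inverse of $T+sI$ is order-reversing in the last diagonal block, so compressing with $E_1$ preserves weak inequalities. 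Strictness then follows from the nondegeneracy of the $\Beta_i$.

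\textbf{Item 1: the Stieltjes property.} I would show that $\hat\blF^{\bphi}_m$ extends analytically to $\CC\setminus(-\infty,0]$ with the principal branch of $\sqrt{s}$, and that $\operatorname{Im}\hat\blF\le0$ for $\operatorname{Im}s>0$, together with positivity on $\RR_+$. The cleanest route is a port-Hamiltonian energy identity. Multiply the system by $\overline{U}^T$ to get
\[
\overline{U}^T Z_m U + s\,\overline{U}^T\widehat\Gamma_m U + \sqrt{s}\,\overline{\blU}_{m+1}^T\bphi\,\blU_{m+1} = \overline{\blU}_1^{\,T}\ \ (\text{up to sign}).
\]
Dividing by $s$ (the standard Stieltjes test for $s\hat\blF$ or $\hat\blF$), the three terms have arguments $-\arg s$, $0$ and $-\tfrac12\arg s$. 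All lie in the same closed half-plane when $0<\arg s<\pi$, which yields the sign of $\operatorname{Im}\hat\blF$ and also invertibility, hence analyticity off $\RR_-$.

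Equivalently, one can argue structurally. The map $s\mapsto(\sqrt{s}\bphi)^{-1}$ is a Stieltjes function, and the Stieltjes class of $p\times p$ matrix functions is closed under the operations $X\mapsto\gam+X$ and $X\mapsto(s\hgam+X^{-1})^{-1}$. Applying this closure inductively down the recursion gives the result. The branch cut comes only from $\sqrt{s}$, so the singularities lie on $\RR_-$, and non-meromorphy follows since the measure has an absolutely continuous part.

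\textbf{Main obstacle.} I expect item 1 to be the hardest step, specifically checking the closure of the matrix Stieltjes class under the inductive step (the argument/half-plane bookkeeping in the energy identity for matrices). I would do it via the quadratic-form identity, applied to $v^*\hat\blF v$ for arbitrary $v\in\CC^p$, which reduces everything to scalar argument estimates.
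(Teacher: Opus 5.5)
Your proposal is correct. For item 2 it follows the paper's argument. The paper also propagates strict L\"owner monotonicity through the S-fraction recursion $\blC_i=(s\hgam_i+(\gam_i+\blC_{i+1})^{-1})^{-1}$ with $\blC_{m+1}=(\bphi\sqrt{s})^{-1}$; your $\blC_i$ is the paper's $\gam_{i-1}+\blC_i$. The paper gets the Gau{\ss} and Gau{\ss}--Radau bounds as the limits $\bphi^{-1}\to0$ and $\bphi^{-1}\to\infty$ of a strictly monotone family. Your admittance start $Y_{m+1}=0$ reaches the Radau endpoint without passing to a limit. Your alternative through $0<\delta\Alpha^{\bphi}_m(s)<(\hKappa_m)^{-T}\gam_m^{-1}(\hKappa_m)^{-1}$, with strictness from the nonsingular $(m,1)$ block of the resolvent, is a valid independent route.

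Item 1 is where you genuinely differ. The paper only argues that each step is a M\"obius map sending Stieltjes functions to Stieltjes functions, starting from $(\bphi\sqrt{s})^{-1}$; this is your ``structural'' alternative. Its remark after the proof explicitly leaves $p>1$ open. Your quadratic-form identity, tested with $v^*\hat\blF^{\bphi}_m v$, gives in one stroke, for every $p$:
\begin{itemize}
\item invertibility, hence analyticity, on $\CC\setminus(-\infty,0]$;
\item the sign conditions $\Im\hat\blF^{\bphi}_m\le 0$ and $\Im(s\hat\blF^{\bphi}_m)\ge0$ for $\Im s>0$;
\item positivity on $\RR_+$.
\end{itemize}
It is the general-$s$ form of the port-Hamiltonian balance \eqref{eq:master}, which the paper writes only on $\RR_-$. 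So your route buys rigor and the block case together. The paper's is shorter and shows directly that the branch cut is inherited from the termination.

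Two details to fix when you write it out. First, set $q_i=\gam_i^{-1}(\blU_i-\blU_{i+1})$, so that $q_0=I_p$ by \eqref{eqn:line1} and $q_m=\sqrt{s}\,\bphi\,\blU_{m+1}$ by \eqref{eq:sommerfeld}. Summation by parts then gives
\[
\sum_{i=1}^{m} q_i^H\gam_i q_i+\sqrt{s}\,\blU_{m+1}^H\bphi\,\blU_{m+1}+s\sum_{i=1}^{m}\blU_i^H\hgam_i\blU_i=\bigl(\hat\blF^{\bphi}_m(s)\bigr)^H .
\]
The stiffness term must therefore be that of the extended string, containing the actual $\blU_{m+1}$ in the last spring. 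If you use $\overline U^TZ_mU$ with the Dirichlet matrix $Z_m$, the last spring sees $\blU_{m+1}=\mathbf{0}$ and the identity fails. The sign is also $+$, not ``up to sign''.

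Second, the sign of $\Im\hat\blF^{\bphi}_m$ comes from this undivided identity: its terms have arguments $0$, $\tfrac12\arg s$ and $\arg s$, all in the closed upper half-plane. Dividing by $s$, as you wrote, gives instead $\Im(s\hat\blF^{\bphi}_m)\ge0$. Both conclusions hold, so nothing breaks.
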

 The proof is given in Appendix~\ref{sec:ProofProp1}. Proposition~\ref{prop:main} states that the exact solution  and the Kreĭn--Nudelman approximation satisfy the same two-sided bounds. 

Finally, in Appendix~\ref{ap:ContFrac} we introduce the block Stieltjes continued-fraction representation \eqref{eq:S-fraction2} of $\hat\blF^{\bphi}_m(s)$ which matches $2m-1$ Stieltjes moments of $\blF (s)$.  The
classical Kreĭn--Nudelman extension \cite{KreinNudelman1989} matches $2m $ moments without additional cost. Our Kreĭn--Nudelman approximation can be adapted accordingly. However, it leads to an additional free matrix valued parameter complicating optimization presented in  section~\ref{sec:HermitePade}.  Its benefits in testing are insignificant, yet, for completeness, we give it in Appendix~\ref{ap:extended}.

\section{Selection of \texorpdfstring{$\bphi$}{bphi} via maximization of energy outflow}\label{sec:HermitePade}
It remains to choose the non-negative matrix parameter ${\bphi}$ that minimizes the ``reflection'' induced by the truncation of the Lanczos recursion, {\clr by shifting the poles of the approximation, $\hat \blF^{{\bphi}}_m(s)$ as far onto the second Riemann sheet as possible}. The main idea is to interpret the system as a Stieltjes string that is truncated by a damper $\bphi$ and to maximize the ratio of dissipated energy to the total stored energy in the system. While this ratio vanishes for both Gau{\ss} and Gau{\ss}--Radau quadratures, an optimal choice of $\bphi$ should yield a non-trivial positive semi-definite value. In Appendix~\ref{sec:port} we link the block Stieltjes string to port-Hamiltonian systems and give a detailed derivation.

First, we use the 1D Kreĭn--Nudelman representation and its energy considerations in choosing ${\phi}$.
Multiplying ODE~\eqref{eq:ssKN} by {\clr the complex conjugate of function $u(x)$ denoted $\overline{u(x)}$} and integrating by parts on the interval $[0,x']$, $x'>0$, we obtain
\be\label{eq:energyxm}
{\clr \hat\blF_m^{\phi}(s)=-u(0)[\overline{u}_x(0)] =\int_0^{x'} [ u_x\overline{u_x}+s\mathbf{m}{\clr (x)}u\overline{u}]dx- \left.(u_x\overline{u})\right|_{x=x'}.}
\ee
For $x'=\infty$  and $s\in\CC\setminus (-\infty, 0)$ the last term vanishes and we  obtain   \[\hat\blF_m^{\phi}(s)= \int_0^{\infty} \left[ |u_x|^2+s\mathbf{m}(x)|u|^2\right]\,dx,\]
which for $p=1$ becomes the energy of the system or a stationary functional.
If $x'=x_{m+1}$ with the help of the radiation condition~\eqref{eq:sommerfeldE}  we obtain
\be\label{eq:energyKN}
\hat\blF_m^{\phi}(s)=\int_0^{x_{m+1}} \left[ |u_x|^2+s\mathbf{m}(x)|u|^2 \right] dx +  \clr {({\sqrt{s}}{\phi})\left.|u|^2\right|_{x=x_{m+1}}}.
\ee
The last term of the RHS is related to the energy outflow due to the boundary truncation at $x_{m+1}$. 
{\clr Maximizing this term relative to the stored energy therefore achieves
maximal damping of the Kreĭn--Nudelman term. The physical intuition is that the
exponential damping rate of each mode is proportional to the negative real part
of its pole. Directly maximizing those real parts, or the area of analyticity of
$\hat\blF_m^{\phi}(s)$, is awkward to formulate as an optimization problem, so
we instead maximize the ratio of energy outflow to stored energy. The energy outflow
equals, up to a positive factor, the density of the spectral measure of
$\hat\blF^{\bphi}_m$ on the branch cut, so the maximization can be seen
an optimization of the regularity of that density.}

{\clr We note that the transfer functions of these second-order problems are related to their first-order counterparts by a factor of $\sqrt{s}$ (See Remark~\ref{rem:oneLap}). For the latter, the energy outflow can be obtained directly from the imaginary part of the transfer function, yet the stored energy cannot. 
A detailed derivation of this relationship and the used objective function using port-Hamiltonian systems \cite{PortHammelVanDer,PortHammilBettie} is provided in Appendix~\ref{sec:port}.}

{\clr Thus in the general $p>1$ case we} consider the following optimization problem 
{\clr
\be\label{eq:Optimize}
\bphi^{\star}
=\arg\max_{{\bphi}\geq0}\ \int_{\cal S}
\frac{\Re\,\mathrm{tr}\!\left(-\dfrac{ \hat\blF^{\bphi}_m(s) }
{\sqrt{s}}\right)}
{\|X^{\bphi}\|^{2}_{F}
+\dfrac{1}{|s|}\,\|X^{\bphi}\|^{2}_{\Re\,\hat T^{\bphi}_m(s)}}\ ds ,
\ee
with the Kre\u{\i}n--Nudelman solution
\[
X^{\bphi}(s) = (T_m - E_m\delta\Alpha_m^{{\bphi}}(s) E_m^T + sI )^{-1} E_1=E_1^T(\hat T_m^{\bphi}(s)+sI)^{-1}E_1,
\]
 and matrix norm $\|X\|^2_{M}=\mathrm{tr}\!\left(X^{H}MX\right) $.
 The numerator of \eqref{eq:Optimize} is the energy outflow,  the denominator is the stored energy with the first and second terms being respectively electric and magnetic energies, if one views the Kreĭn--Nudelman network in terms of RLC network instead of the original mechanical interpretation by Kreĭn and Nudelman \cite{KreinNudelman1973}, as shown in Appendix~\ref{sec:port}, see Figure~\ref{fig:tikzexample}.
 In our implementation, ${\cal S}$ is a contour} enclosing the sub-interval $[-d,\, 0]$ of  $-A$'s spectrum at a distance comparable with the spectral gaps, to avoid numerical problems due to the singularities of  $\hat\blF^{{\bphi}}_m$ at Ritz values. The lower boundary $d$ is chosen so that the spectral measure of $-A$ approximates its exact continuous counterpart sufficiently well on the sub-interval, and  $[-d,\, 0]$  contains a sufficient number ($\gg p^2$) of Ritz values to avoid overfitting. We use  the Nelder–Mead simplex method \cite{LagariasOpt} for the optimization  in our numerical experiments.

The trace in equation~\eqref{eq:Optimize} accounts for the MIMO case, where
$\hat\blF^{{\bphi}}_m$ is matrix-valued. Maximizing the relative energy
dissipation while preserving $2m-1$ of the $2m$ Stieltjes moments matched by the
block Lanczos method shifts the poles of $\blF_m$ (the negative eigenvalues of
$T_m$) to those of $\hat\blF^{{\bphi}}_m$ (the negative Kreĭn--Nudelman
eigenvalues of $\hat T_m^{{\bphi}}(s)$, or of the linear eigenvalue problem
{\clr associated with}~\eqref{eq:Ph_phi}), moving them off the negative real
axis onto the second Riemann sheet ($\Re\sqrt{s}<0$). This effectively increases the domain of analyticity for the approximation, as illustrated in Figure~\ref{fig:lists}. {\clr  The figure shows both Riemann sheets of the Kre\u{\i}n--Nudelman transfer function by representing it as a function of $\sqrt{s}$. Transfer functions associated with PDEs on unbounded domains can be represented in terms of scattering poles located on the second Riemann sheet \cite{Zworski}. This motivates constructing approximations that also have poles on the same sheet. These approximate poles cannot, however, be interpreted as approximations of the true scattering poles. As $m$ increases, they move toward the branch cut, represented here by the imaginary axis. The Kre\u{\i}n--Nudelman approximation is rational in $\sqrt{s}$. The geometric convergence rate of such rational approximations is governed, roughly speaking, by the distance from the approximation region to the nearest pole. It is therefore advantageous to displace the approximate poles as far away as possible, thereby enlarging the domain of analyticity. In contrast, the Gau{\ss}  poles lie on the imaginary axis, so that the analyticity domain of the corresponding approximation is restricted to the first Riemann sheet. }

\begin{figure}[ht!]
  \centering
  \includegraphics[width=\textwidth]{\pathfig/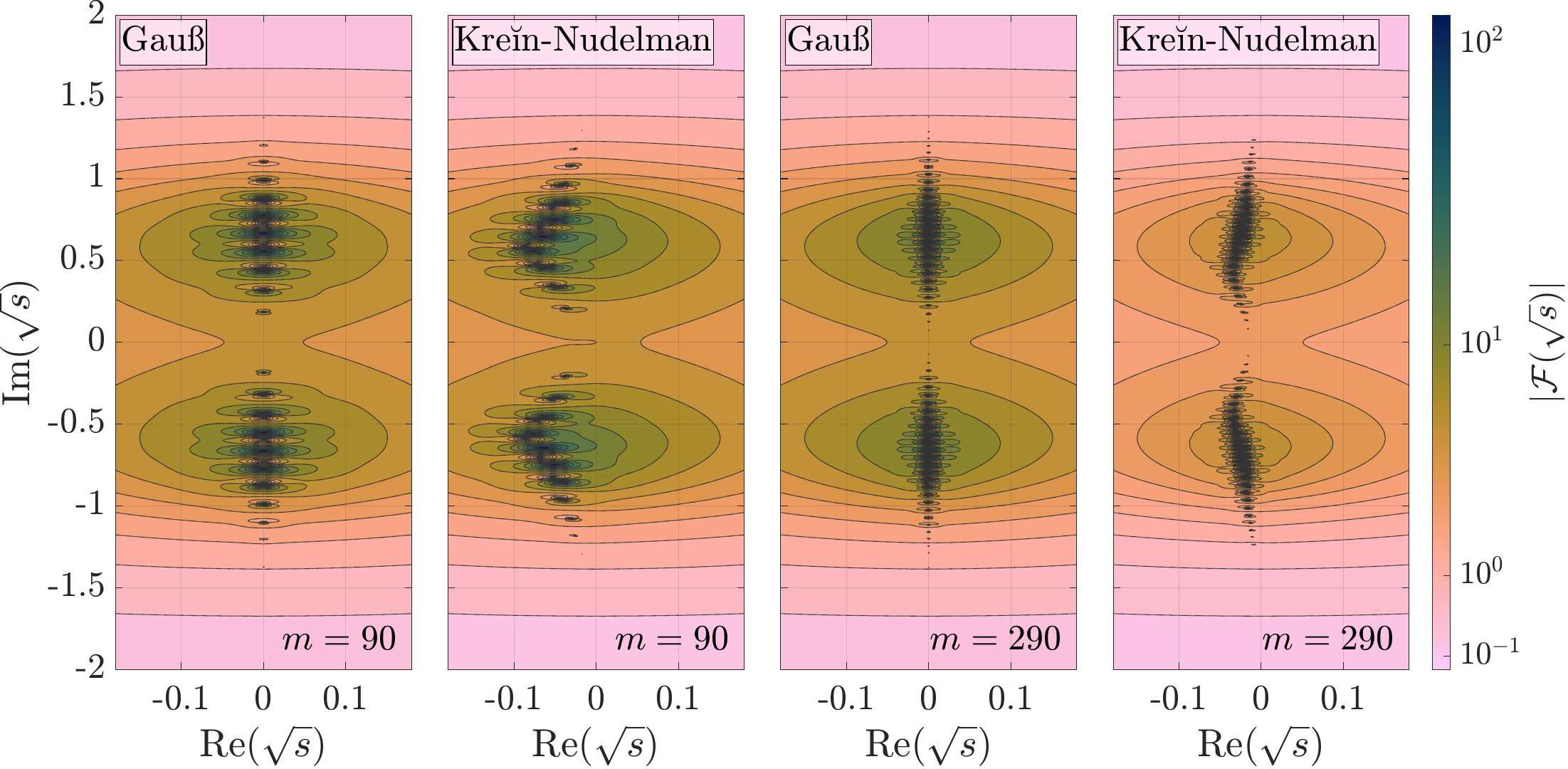}
  \caption{This figure shows a contour plot of respectively $\log |1+\hat \blF^{{\bphi}}_m(s)|$ (Kreĭn--Nudelman)  and $ \log |1+\blF_m(s)|$ (Gau{\ss})  with  $p=1$ (SISO) for $m=90$ (left) and $m=290$ (right)  as a function of $\sqrt{s}$ in the  complex plane for the 3D Maxwell problem from subsection~\ref{sec:EMdiff}. To show both Riemann sheets in a single figure, the contours are shown as a function of the real and imaginary part of $\sqrt{s}$. The  Gau{\ss} quadrature $\blF_m(s)$ is a single-valued function, thus it is even with respect to the imaginary axis, and as a function of $\sqrt{s}$ has poles on the imaginary axis.   Due to the introduction of $\sqrt{s}$ in the definition of the Kreĭn--Nudelman quadrature $\hat \blF^{\bphi}_m(s)$ it becomes a two-valued function (similar to the transfer function for the continuous problem), with the branch cut at the imaginary axis and poles at the second Riemann sheet $\Re(\sqrt {s})<0$. The dissipation optimization moves poles away from the branch cut adaptively; {\clr  i.e., according to Remark~\ref{rem:pert}, (in the perturbative regime) the Ritz values  are shifted  to the second Riemann sheet proportionally to their distance from the true spectrum, thus poorly converged Ritz values in the dense spectral area are moved further than converged well-separated Ritz values.} This distance diminishes for the larger $m$ together with the quadrature approximation error, thus the Kreĭn--Nudelman poles approach the ones of the Gau{\ss}\,  quadrature as $m$ grows. The spectral measure at the branch cut is smooth in the area of poor convergence, collectively approximating the Stieltjes moments at dense intervals of $A$'s spectrum. {\clr These numerical results visually confirm the schematic representation in Figure~\ref{fig:TikzDrawing}. }}
  \label{fig:lists}
\end{figure}

{\clr
Assuming matrix $A$ approximates an operator with a continuous spectrum, $\hat{\blF}^{{\bphi}}_m$ approximates a function characterized by poles on the second Riemann sheet similar to scattering poles \cite{Zworski}.} The optimization improves the convergence until the approximation error becomes comparable to the error introduced by the discretization of the operator in $A$. This balance provides a reasonable stopping criterion. A similar approach proved successful in the optimization of perfectly matched layers \cite{druskin2016near}. While the reasoning presented here is intuitive, a rigorous mathematical justification is the subject of planned future work.

 {\remark During the optimization, we need to compute
\[
X^{\bphi}(s) = (T_m - E_m\delta\Alpha_m^{{\bphi}}(s) E_m^T + sI )^{-1} E_1,
\]
repeatedly for different $\bphi$. Application of the Sherman--Morrison--Woodbury
formula gives
\begin{align*}
X^{\bphi}(s) = &(T_m + sI )^{-1} E_1 \\
&-(T_m + sI )^{-1} E_m
 \left[-\{\delta\Alpha_m^{{\bphi}}(s)\}^{-1}+ E_m^T (T_m + sI )^{-1}E_m\right]^{-1}
 E_m^T(T_m + sI )^{-1}E_1 ,
\end{align*}
and thus we precompute
\begin{align*}
X_m^m(s)&=(T_m + sI )^{-1} E_m, &  X_m^1(s) &=(T_m + sI )^{-1} E_1,\\
F_{m}^{m,m}(s)&=E_m^TX_m^m(s), & F_{m}^{1,m}(s)&=E_m^TX_m^1(s),
\end{align*}
for all quadrature points used to discretize the integral in the optimization
objective~\eqref{eq:Optimize}. Only $p \times p$ systems then have to be solved
repeatedly during the optimization,
\[
X^{\bphi}(s) = X_m^1(s)-X_m^m(s)
\left[-(\delta\Alpha_m^{{\bphi}}(s))^{-1}+ F_{m}^{m,m}(s)\right]^{-1}F_{m}^{1,m}(s).
\]
}

\section{Numerical Examples}\label{sec:NumEx}
In this section, we apply our approach to three model problems in unbounded domains: 2D diffusion, the 3D quasistationary Maxwell's system, and the 2D wave equation.

 A critical requirement for our method is that the matrix $A$ is s.p.d. and accurately approximates the resolvent for $s$ away from the negative real semi-axis. This allows us to use the block Lanczos algorithms without relying on absorbing boundary conditions or perfectly matched layers (PML), which would lead to non-Hermitian matrices.
 
Instead, we use the non-absorbing optimal grid with Dirichlet boundary conditions proposed in \cite{idkGrids}. This grid follows a log-centered geometric progression with a progression factor of $\exp(\pi/\sqrt{N_{\rm opt}})$, where $N_{\rm opt}$ denotes the number of grid points in the exterior domain.

Among all geometric progression grids, this choice yields the optimal convergence rate of $\exp(-\pi\sqrt{N_{\rm opt}})$ for the spectral measure over the optimization interval.\footnote{Superior rates can be achieved via Zolotarev or Neumann approximations \cite{idkGrids}. Additionally, approximations more closely linked to interior finite-difference discretizations are proposed in \cite{druskin2016near}, where they are also applied to the discretization of PML.}

The generic configuration combining a uniform interior grid with this optimal exterior grid is illustrated in Figure~\ref{fig:Heat2config}. Note that the figure depicts only the first steps of the exponentially expanding exterior grid, as the step sizes grow rapidly thereafter.

\subsection{SISO 2D Diffusion test case}\label{sub:2d}

We discretize the symmetrized elliptic operator  
\be\label{eq:2d}
A\approx -\sigma({\bf x})^{-\frac{1}{2}}\Delta\sigma({\bf x})^{-\frac{1}{2}}
\ee
on an unbounded domain using second-order finite differences over a $300\times 300$ grid with $h=1$.  To approximate this, we consider a Dirichlet problem on a bounded domain with $N_{\rm opt} = 10$ geometrically increasing grid steps in the exterior and a uniform grid in the interior part. This formulation is relevant to both heat transfer and the scalar 2D diffusion of electromagnetic fields, as well as to the wave propagation problem described in subsection~\ref{sub:wave}. 
{\clr
The  function $\sigma(\bx)$ is illustrated in Figure~\ref{fig:Heat2config}.  Here, $B$ is represented as a discrete delta function vector with a single nonzero entry at the grid node corresponding to the transducer location, shown as $\times$. The configuration is equivalent to the one presented in \cite{zimmerling2025monotonicity}.}

{\clr Figure~\ref{fig:SISO_07_2026} presents the results for this model with a single vector $B$ ($p=1$). A very brief, nearly unnoticeable initial sublinear convergence phase (due to the presence of well-separated eigenvalues near the boundary of the spectral interval) is followed by a superlinear and then a linear convergence stage. Initially, the error is dominated by the approximation of the well-separated part of the spectrum, and the conventional Gau{\ss} approach is superior.} However, both, the Gau{\ss}--Radau~(GR) and the Kreĭn--Nudelman~(KN) formulations become competitive for the interval of {\clr linear} convergence, where the error is dominated by the approximation of the almost continuous spectrum of matrix $A$. The arithmetic average of the Gau{\ss} and Gau{\ss}--Radau quadrature (GR avg.) is the average of two S-fractions and has a similar convergence rate. Thanks to its adaptive properties, the Kreĭn--Nudelman method (a single S-fractions) shows generally better convergence than the averaged Gau{\ss} and Gau{\ss}--Radau, however, the latter is much smoother.  In the future we  plan to make the optimization routine used to find $\phi$ in the Kreĭn--Nudelman approach more robust since the oscillations observed in the convergence curve originate from a lack of robustness of the current optimization framework. A possibility of such an improvement follows from Figure~\ref{fig:SISO_07_2026}-a, where the optimal $\phi$ found by true error minimization  (which we call ``cheating'' because the true solution is required) compares with the one found in our energy optimization framework. Averaging the $\phi$ obtained for several iterations will lead to a better-behaved convergence, and the averaged $\phi$ is close to the "cheated" curve. 
\begin{figure}[!ht]
	\centering
	\includegraphics[trim=0mm 0mm 0mm 0mm,clip, width = 0.55 \linewidth]{\pathfig/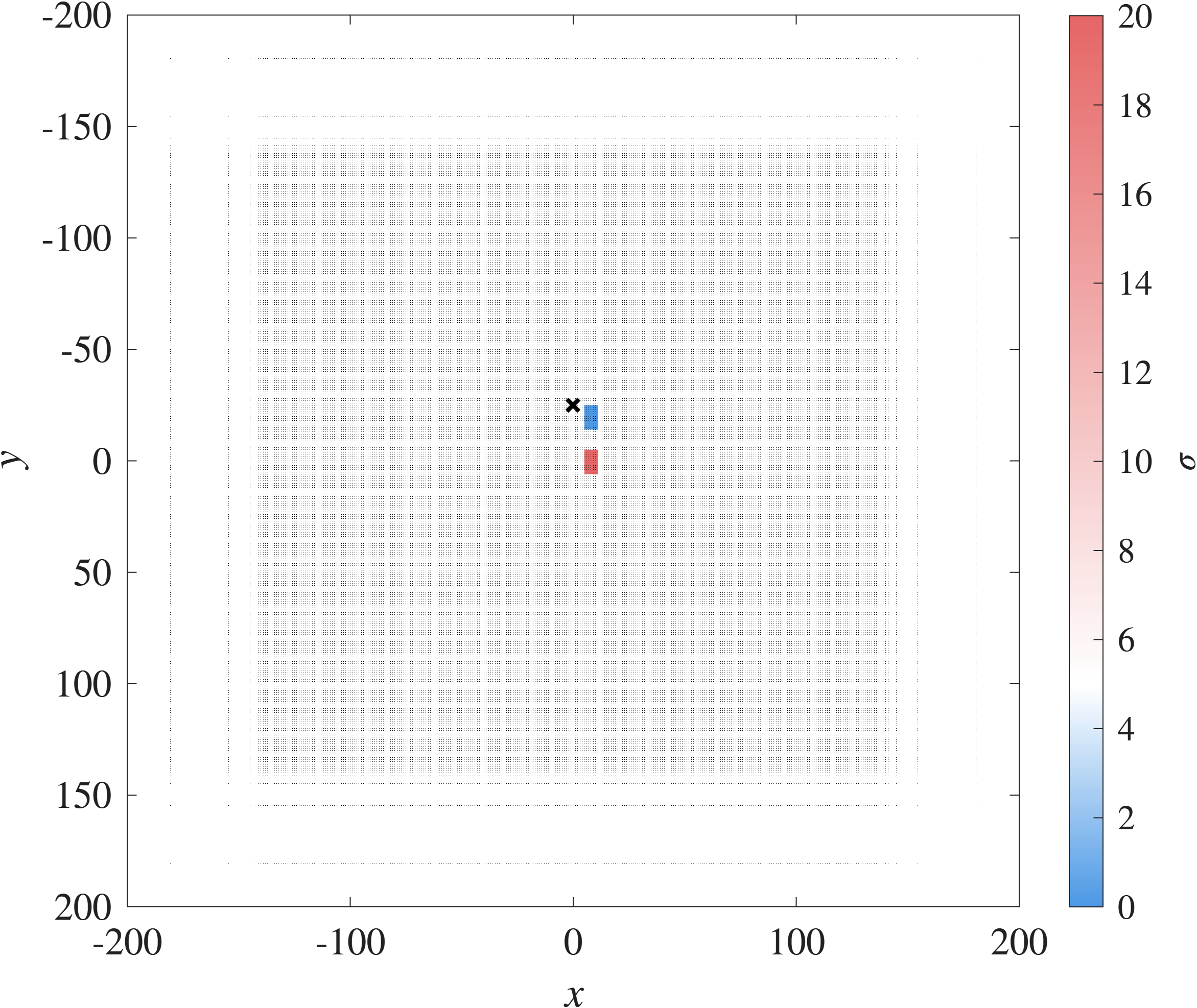}

	\caption{Grid, heat conductivity $\sigma(\bx)$ and transducer location ($\boldsymbol \times$) of the heat diffusion {\clr test case}. In this SISO case only one source $p=1$ is used. The primary grid points are shown as small black dots. Only the first geometrically increasing grid steps are shown. {\clr This configuration is identical to the first example in \cite{zimmerling2025monotonicity}.}}\label{fig:Heat2config}
\end{figure}

\begin{figure}[h!]
 \centering

 \begin{subfigure}[b]{.94\linewidth}
 \centering
  \includegraphics[width = 0.5\linewidth]{\pathfig/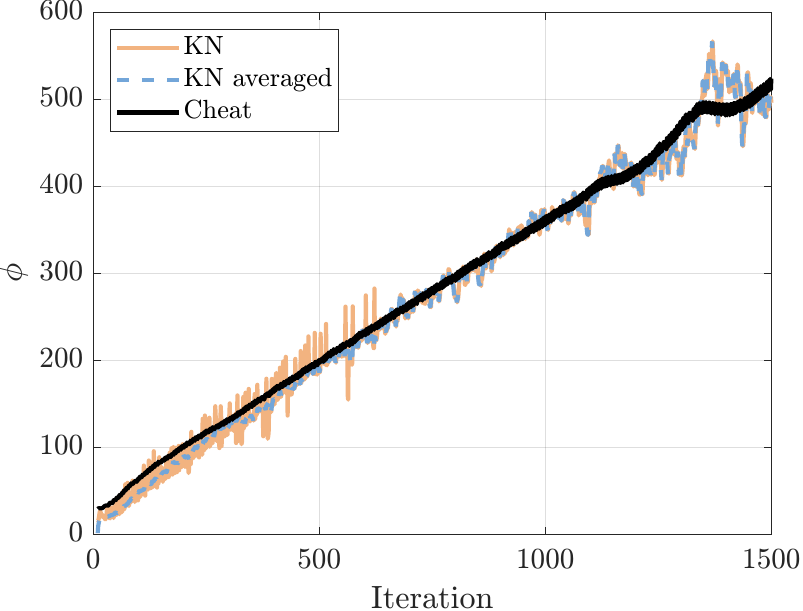}
 \caption{Recovered values for $\phi$ from dissipative energy maximization compared to the value of $\phi$ that minimizes the distance to the true ${\cal F }(s)$ for $s\in[10^{-5}\,\, 10^0] \cup \imath[10^{-5}\,\, 10^0]$. We call this the cheated $\phi$. }
 \label{fig:4_config}
 \end{subfigure}%
 \hspace{1em}

 \begin{subfigure}[b]{.47\linewidth}
 \centering
 \includegraphics[width = 1\linewidth]{\pathfig/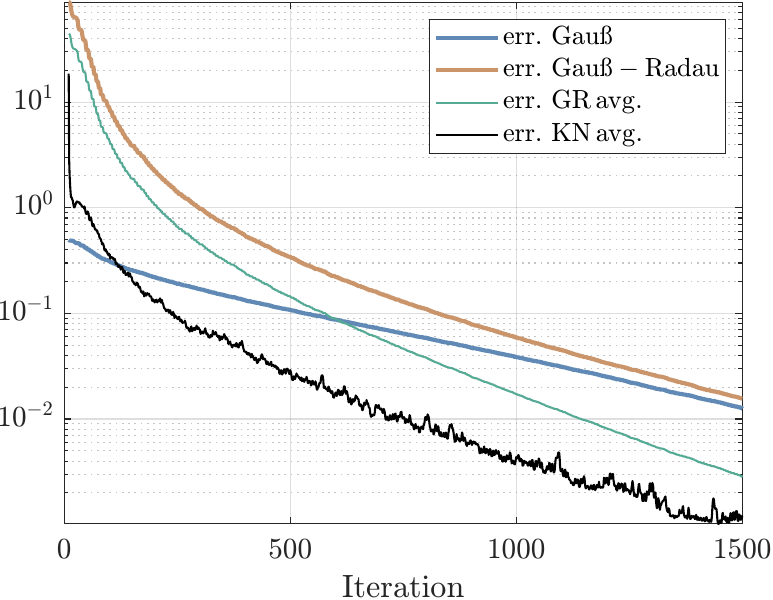}
 \caption{Convergence for $s=4\cdot 10^{-5}\imath$.}
 \label{fig:EMImag}
 \end{subfigure}%
 \hspace{1em}
 \begin{subfigure}[b]{.47\linewidth}
 \centering

\includegraphics[width = \linewidth]{\pathfig/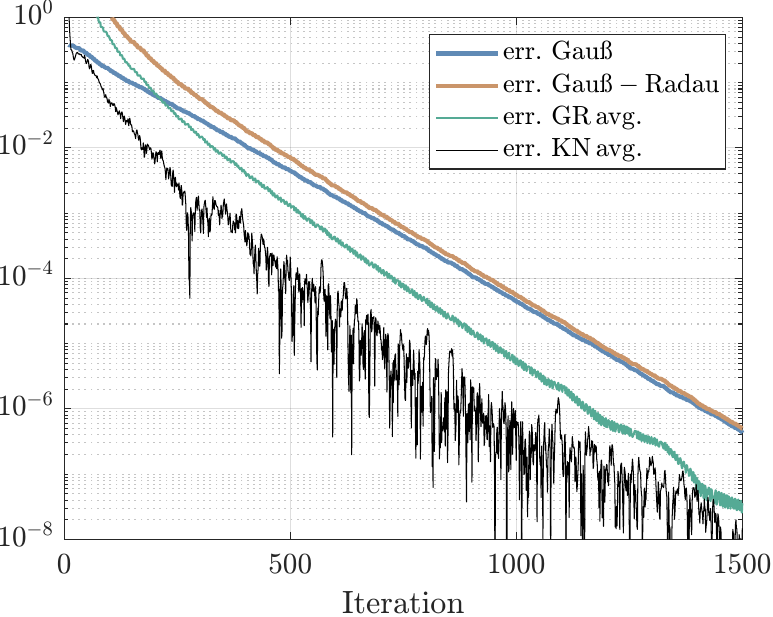}
 \caption{Convergence for a real $s=3\cdot10^{-4}$ value. }
 \label{fig:EMReal}
 \end{subfigure}
\caption{Converge results for the 2D diffusion case in the SISO setting ($p=1$).}
\label{fig:SISO_07_2026}
\end{figure}

\subsection{MIMO 3D Electromagnetic  diffusion \texorpdfstring{\clr test case}{test case}}\label{sec:EMdiff}
In this example, we consider the solution of the 3D Maxwell equations in the quasi-stationary (diffusive) approximation. Krylov subspace methods {\clr for} the computation of the action of matrix functions in this context were first introduced in \cite{druskin1988spectral} and remain a widely used approach in geophysical applications. The diffusive Maxwell equations in $\mathbb{R}^{3}$ can be rewritten as  
\be\label{eq:max}
(\nabla \times \nabla \times +\sigma(\bx) \mu_0 s)\mathcal{E}_{(r)}(\bx,s) = - { s}\mathcal{J}^{\rm ext}_{(r)}(\bx,s), \quad \bx\in \Omega,
\ee  
where $\sigma$ represents the electrical conductivity, $\mu_0$ is the vacuum permeability, and $\mathcal{J}^{\rm ext}_{(r)}$ is the external current density associated with the transmitter index $(r)$, generating the electric field $\mathcal{E}_{(r)}(\bx,s)$. For $s\in\CC\setminus (-\infty, 0)$ , the solution of \eqref{eq:max} vanishes at infinity,  
\be\label{eq:inftymax}  
\lim_{\bx\to\infty} \|\mathcal{E}_{(r)}(\bx,s)\|=0.  
\ee  

To approximate the Maxwell equations \eqref{eq:max}-\eqref{eq:inftymax}, we employ the conservative Yee grid, following the methodology outlined in \cite{druskin1988spectral}. Similarly to the approach used for the diffusion problem in $\mathbb{R}^2$, we truncate the computational domain by imposing Dirichlet boundary conditions on the tangential components of $\mathcal{E}_{(r)}$ at the boundary. The grid is constructed with an optimally spaced geometric progression, ensuring spectral convergence of { condition~\eqref{eq:inftymax} at infinity}. In our setup, we use $N_{opt}=6$ geometrically increasing grid steps with a scaling factor of $\exp{(\pi/\sqrt{6})}$, as proposed in \cite{idkGrids}. The resulting grid consists of $N_x=80$, $N_y=100$, and $N_z=120$ points, yielding a matrix $A$ of size $N=2\,821\,100$.  

{\clr The conductivity $\sigma$ is homogeneous and set to $10^{1}$ throughout the domain, except for two inclusions where $\sigma=10^{-1}$}, as shown in Figure~\ref{fig:5_config}. The excitation consists of six magnetic dipoles (approximated by electric loops): three positioned above the inclusions and three located at their center, as indicated by the arrows in Figure~\ref{fig:5_config}. This setup is analogous to the tri-axial borehole tools used in electromagnetic exploration geophysics \cite{saputra2024adaptive,zimmerling2025targeted}, leading to a matrix $B$ with six columns.  

 Similarly to the results for the heat operator, the 3D results show an advantage of the Kreĭn--Nudelman method and averaged Gau{\ss} and Gau{\ss}--Radau quadrature rules  over stand-alone  Gau{\ss}  and  Gau{\ss}--Radau on the interval of {\clr linear} convergence. On this interval the error decays linearly due to the dense part of the spectral distribution. Overall, the Kreĭn--Nudelman method  shows the best convergence, and due to the density of $A$'s spectrum for the 3D problem the convergence is much smoother than in the 2D case.

In Figures~\ref{fig:MIMOEMSweepReal} and~\ref{fig:MIMOEMSweepImag}, the errors after
$m=730$ block iterations are shown for a wide range of real and imaginary shifts,
respectively. The Kre\u{\i}n--Nudelman method performs best over the displayed
frequency intervals. For a representative frequency in the middle of this
interval, Figures~\ref{fig:MIMOEMReal} and~\ref{fig:MIMOEMImag} show the
convergence behavior as the number of iterations increases.

 The convergence behavior for the 3D  case is more regular in our experience than in the 2D case, and ${\bphi}$ varies smoothly with the iterations, so averaging over iterations is not necessary. The discrete 3D operator has a denser spectral distribution, so the functional in the objective~\eqref{eq:Optimize} is less affected by  individual Ritz values which can be rather irregular due to instability of the simple block Lanczos algorithm in computer arithmetic. For simplicity, we optimized a scalar truncating factor $\bphi = \phi I_p$ for this MIMO case, as more complex parametrizations did not show immediate improvements.

\begin{figure}[h!]
 \centering

 \includegraphics[width = 0.4\linewidth]{\pathfig/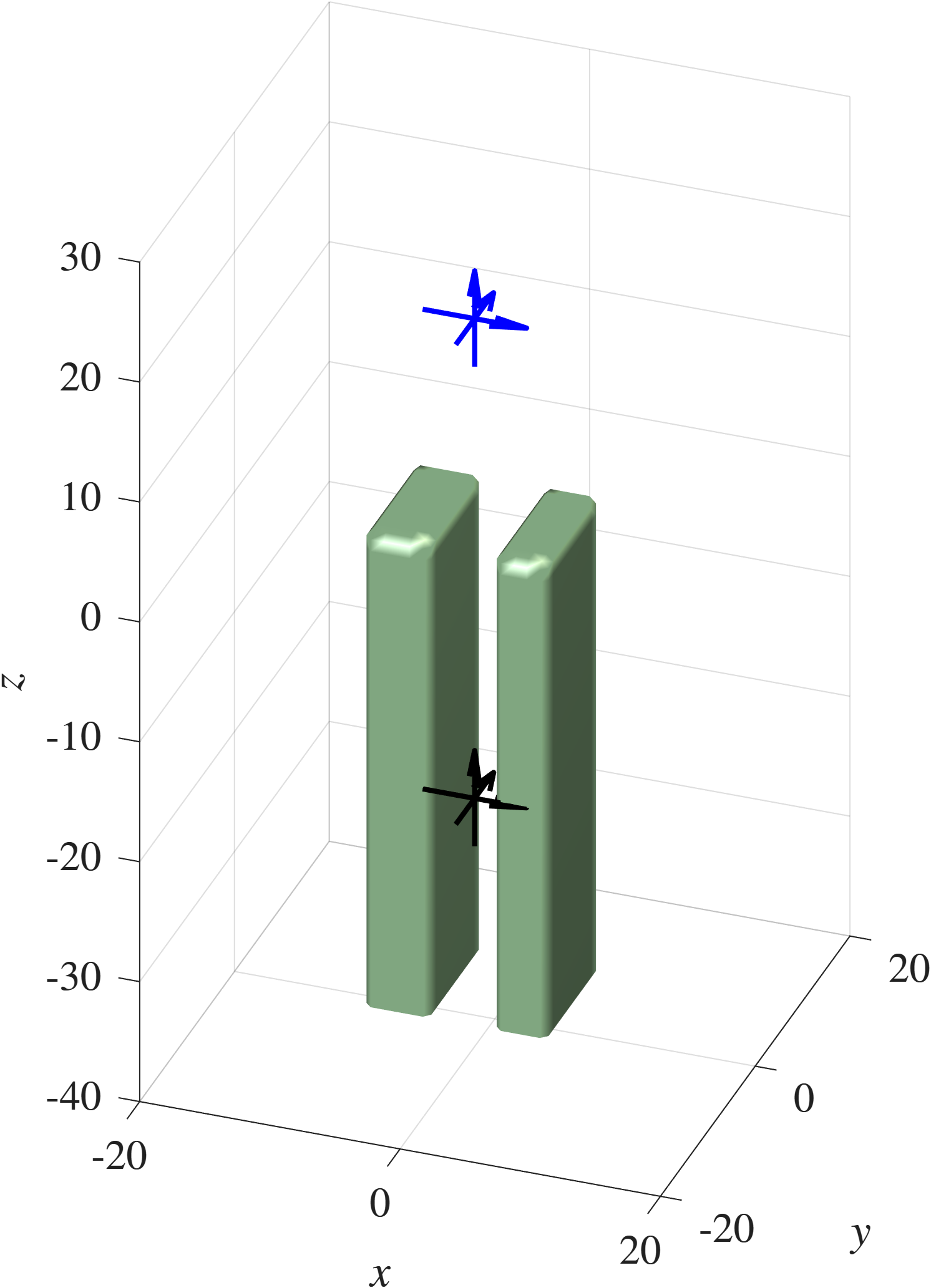}
 \caption{3D rendering of the simulated configuration with two inclusions for the tri-axial electromagnetic borehole tool, $p=6$. {\clr This configuration is identical to the third example in \cite{zimmerling2025monotonicity}.}}
 \label{fig:5_config}
\end{figure}

\begin{figure}[h!]
 \centering
 \begin{subfigure}[b]{.47\linewidth}
 \centering
\includegraphics[width = \linewidth]{\pathfig/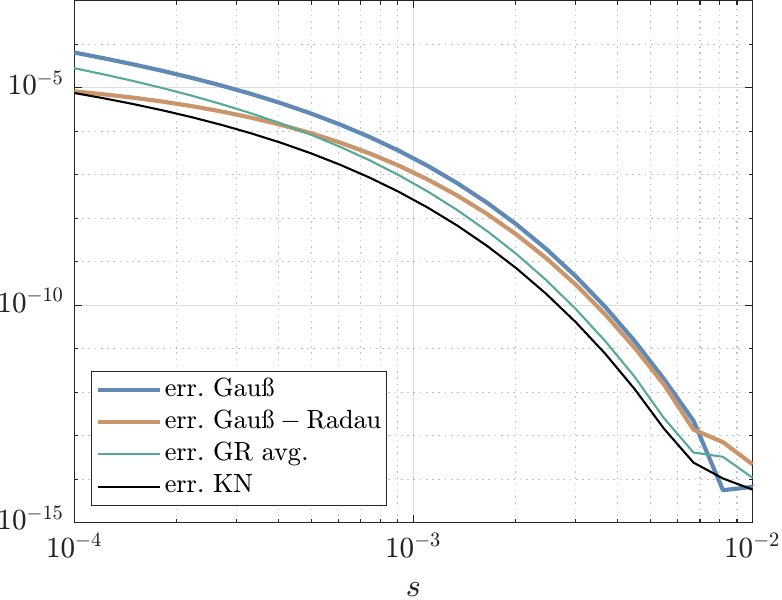}
 \caption{Error norms for various {\clr real} values of $s$ after $m=730$ iterations.}
 \label{fig:MIMOEMSweepReal}
 \end{subfigure}
 \hspace{1em}
 \begin{subfigure}[b]{.47\linewidth}
 \centering
 \includegraphics[width = 1\linewidth]{\pathfig/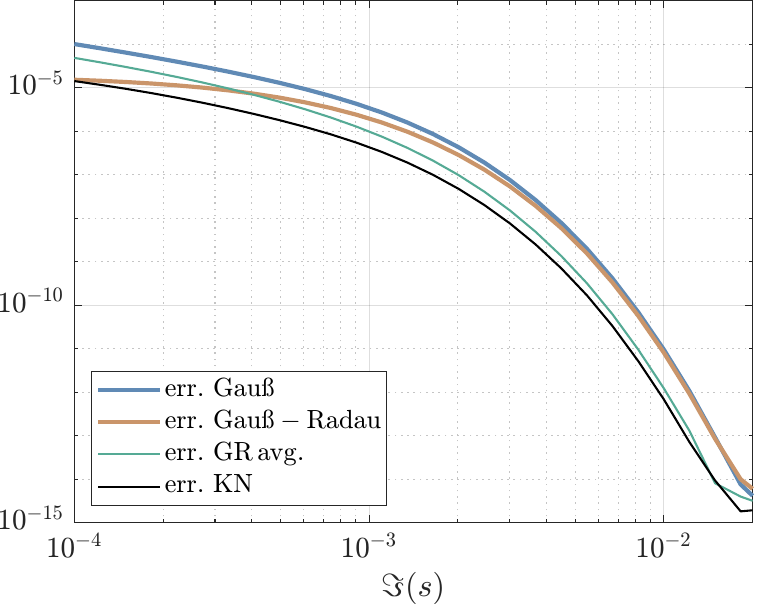}
 \caption{Error norms for various imaginary values of $s$ after $m=730$ iterations.}
 \label{fig:MIMOEMSweepImag}
 \end{subfigure}%

 \begin{subfigure}[b]{.47\linewidth}
 \centering
\includegraphics[width = \linewidth]{\pathfig/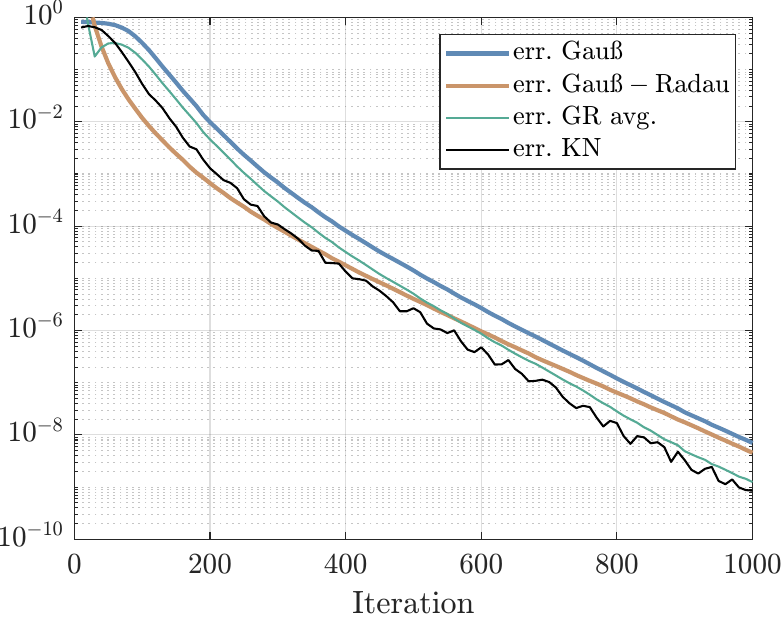}
 \caption{Convergence for $s=10^{-3}$.}
 \label{fig:MIMOEMReal}
 \end{subfigure}
 \hspace{1em}
 \begin{subfigure}[b]{.47\linewidth}
 \centering
\includegraphics[width = \linewidth]{\pathfig/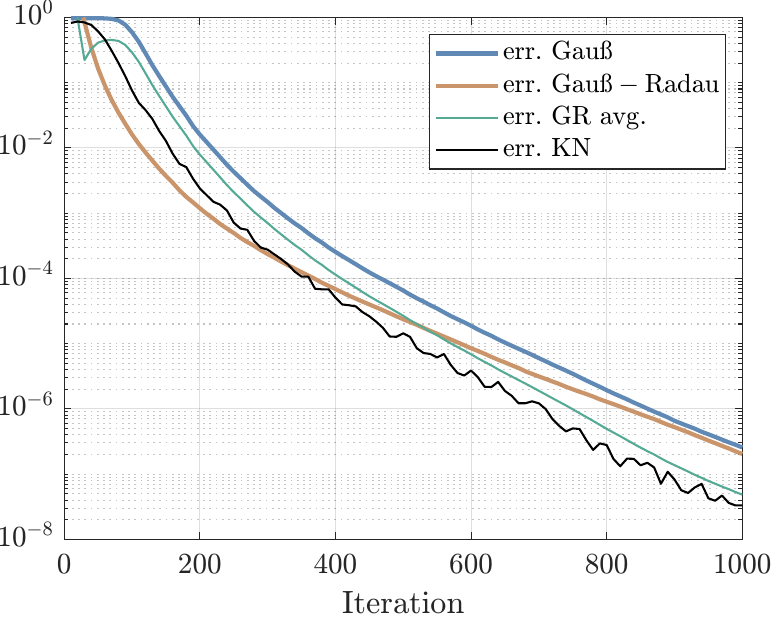}
 \caption{Convergence for  $s=10^{-3} \imath$.}
 \label{fig:MIMOEMImag}
 \end{subfigure}
\caption{Converge results for the 3D EM case in the MIMO setting ($p=6$). As before ``GR average'' is arithmetic average of  Gau{\ss} and Gau{\ss}--Radau quadratures.}
\label{fig:MIMO_03d}
\end{figure}

\subsection{Damped wave-propagation in unbounded domains via Kreĭn--Nudelman extension}\label{sub:wave}

 Let us consider the SISO variant of the 3D Maxwell's problem from subsection~\ref{sec:EMdiff} in the propagative (wave) regime. The solution for this case can be obtained with  $s=(\imath\omega+\epsilon)^2$, $\epsilon>0$  when $\omega > \epsilon$, i.e., the propagative effects are dominant, and still $\epsilon$ is large enough that the wave decays well in the exterior optimal grid (or for practical purposes the grid should be chosen large enough for the spurious reflections to be sufficiently small). This problem corresponds to electromagnetic propagation with harmonic frequency $\omega$ in a lossy medium with dielectric permittivity $(1-\frac{\epsilon^2}{\omega^2})\sigma$ and electrical conductivity   $2\epsilon\sigma$. We present the convergence results of such a simulation in Figure~\ref{fig:decaySolution}. Compared to the cases presented in the previous subsection, the discreteness of the Lanczos spectral approximations has a stronger effect here and the Kreĭn--Nudelman method significantly out-performs the discrete spectral approximations (Gau\ss, Gau{\ss}--Radau and their average)  until the error reaches the level of the spurious reflected wave, which is of order  $10^{-2}$  relative to the exact solution. Then the discreteness of the Lanczos approximation and the exact spectral distribution become of the same level, and the averaged Kreĭn--Nudelman  method's convergence slope becomes comparable to that of the other methods. 
 
\begin{figure}[ht]
\centering
\includegraphics[width=0.45\linewidth]{\pathfig/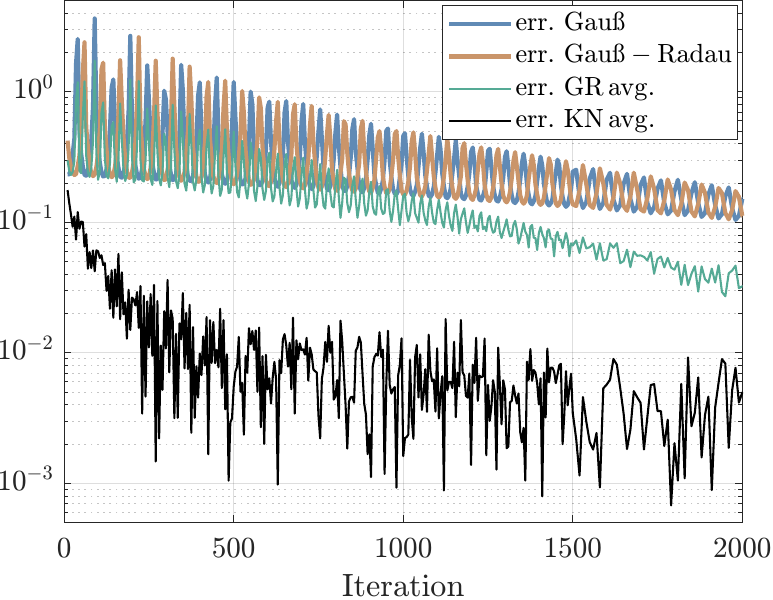}
\caption{Convergence in the wave propagation regime for the 3D electromagnetic problem (subsection~\ref{sec:EMdiff}) with $p=1$. The configuration uses only the central $x$-dipole from Figure~\ref{fig:5_config}, with $s=-0.1 + 0.001\imath$ ($\omega \approx 200 \epsilon$). The legend is the same  as in Figure~\ref{fig:MIMO_03d}. The Kreĭn--Nudelman extension shows superior convergence before stagnating (in fact approaching the convergence speed of the other quadratures) at the level of spurious reflections. {\clr Here, the stronger improvement (compared to the ones in Figures~\ref{fig:MIMO_03d} for the same model but diffusive regime)   during the initial iterations is because the Gau{\ss} and Gau{\ss}--Radau approximations near the branch cut are particularly sensitive to inaccuracies in slowly converging Ritz values, while the Kre\u{\i}n--Nudelman approximation is much less sensitive to that due to the fact that the damped Ritz values lay in the second Riemann sheet, and moreover their distance to the branch cut is proportional to their misfit -- the last component of the corresponding eigenvector of $T_m$. }  }
\label{fig:decaySolution}
\end{figure}

To illustrate the qualitative advantage of the Kre\u{\i}n--Nudelman approximation,  we consider experiments with the state solution $(A+sI)^{-1}B$ for the 2D wave problem (not transfer functions as in the previous examples) using the same setup as in subsection~\ref{sub:2d}, i.e., the same s.p.d.  discretization $A$ as in \eqref{eq:2d} and $B$ in the SISO numerical example of subsection~\ref{sub:2d}. We choose $s\in\CC$  close to the negative real semiaxis ($\omega \gg \epsilon$) to show the effect of boundary reflections. We  compute $\Re \bQ_m (T_m + (\imath\omega+\epsilon)^2 I )^{-1}B \cdot \exp{(\imath \omega t )}$, $\Re \bQ_m (\tilde T_m + (\imath\omega+\epsilon)^2 I )^{-1}B \cdot \exp{(\imath \omega t )}$, and $\Re \bQ_m (\hat T_m^{{\bphi}}(s) + (\imath\omega+\epsilon)^2 I )^{-1}B \cdot \exp{(\imath \omega t )}$ for respectively  Gau{\ss},  Gau{\ss}--Radau and Kreĭn--Nudelman quadratures, for $m=1000$ and $m=2000$ in, respectively, Figures~\ref{fig:wave1000} and   \ref{fig:wave2000}.
For comparison we show the exact solution of the discretized problem $\Re(A + (\imath\omega+\epsilon)^2 I )^{-1}B \cdot \exp(\imath \omega t)$ in the rightmost column of both the  figures.
The top rows in these figures show 2D snapshots for a single $t$.

\begin{figure}[ht]
\centering
\includegraphics[width=\linewidth]{\pathfig/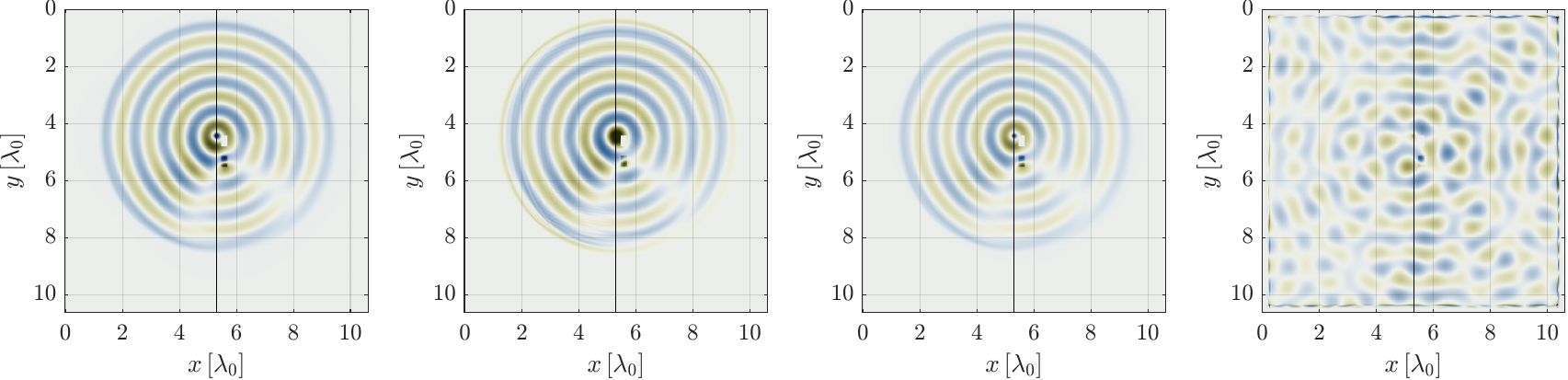}
\includegraphics[width=\linewidth]{\pathfig/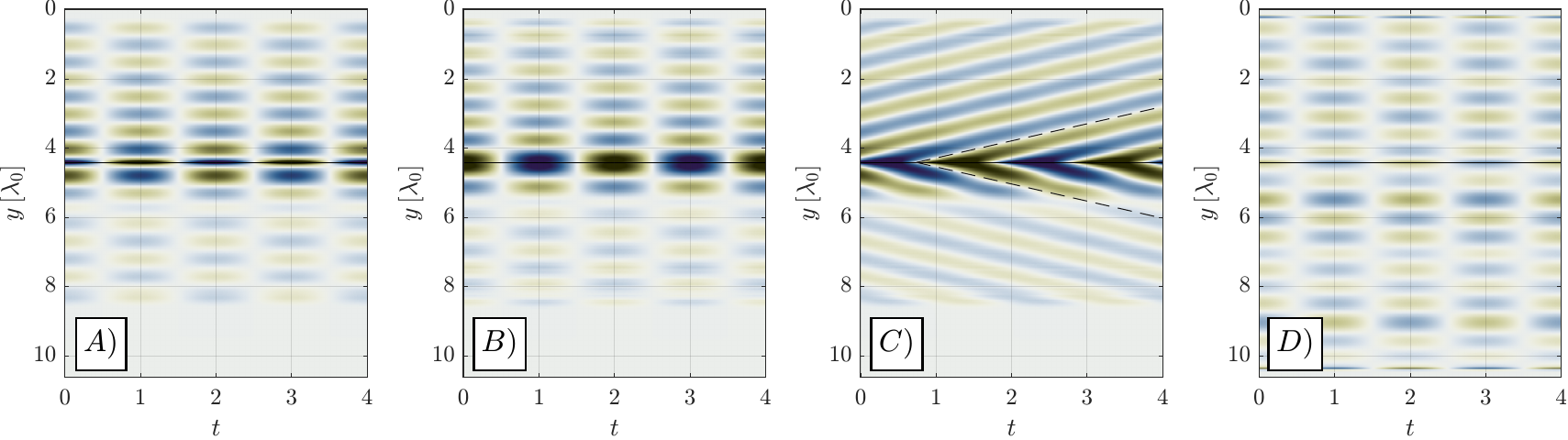}
\caption{Solution approximation of $(A + (\imath\omega+\epsilon)^2 I )^{-1}B$ near the negative real semiaxis after $m=1000$ iterations. \emph{Top Row:} Real part of the state solution for A) the Gau{\ss}, B) Gau{\ss}--Radau, C) Kreĭn--Nudelman approximations, compared with D) the exact solution. \emph{Bottom Row:} Time-domain evolution cross-sections sampled along the vertical black line.  Only the Kreĭn--Nudelman solution (C) exhibits propagating waves traveling along the d'Alembert characteristics (dashed lines). 
In contrast, the Gau{\ss} and Gau{\ss}--Radau approximations yield standing waves consistent with Dirichlet and Neumann boundary conditions, respectively --- reflecting the specific truncation properties of the underlying Stieltjes string.}
\label{fig:wave1000}
\end{figure}
\begin{figure}[h!]
\centering
\includegraphics[width=\linewidth]{\pathfig/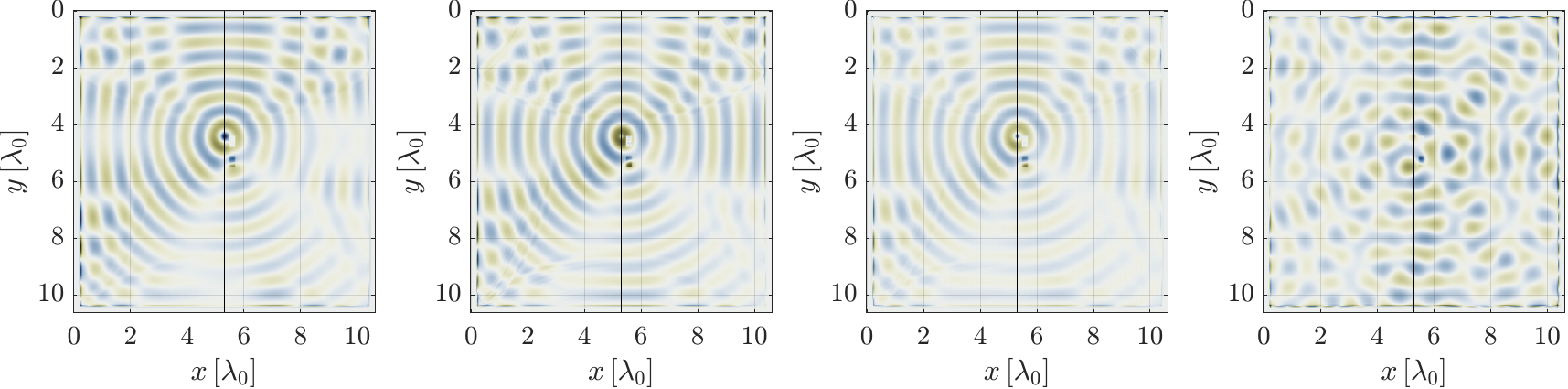}
\includegraphics[width=\linewidth]{\pathfig/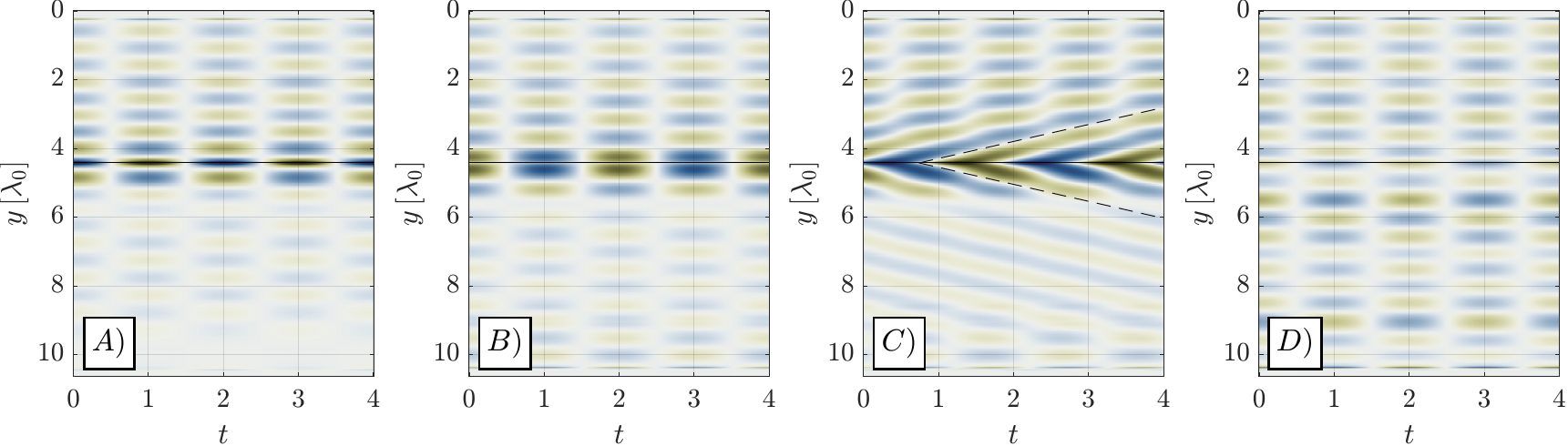}
\caption{Solution approximation for $m=2000$ iterations. Layout and parameters are identical to Figure~\ref{fig:wave1000}. Note that all approximations now show boundary reflections due to converging Ritz values and vectors.} 
\label{fig:wave2000}
\end{figure}

 Let us start with the case $m=1000$. As shown in Figure~\ref{fig:wave1000}, all three approximate solutions exhibit rather regular spherical wavefronts with noticeable scattering effects resulting from the inclusion. The exact solution, however, shows significant irregularity due to waves reflected from the Dirichlet boundary of the computational domain. On the lower row, the time evolution along the vertical cross-section indicated in the top row is shown.
All solutions except the Kreĭn–Nudelman solution exhibit standing-wave behavior, dominated by Ritz vectors or individual eigenmodes. This behavior is expected, since all three dynamics originate from discrete spectra with real eigenfunctions. In contrast, the Kreĭn–Nudelman cross-section clearly captures an outgoing wave propagating along the d’Alembert characteristics of the exact wave equation \[-\sigma({\bf x})^{-\frac{1}{2}}\Delta\sigma({\bf x})^{-\frac{1}{2}}u+u_{tt}=0,\] thereby providing at least a qualitatively accurate approximation of the solution in an open domain, which is not possible with non-absorbing approximations. 

Since the Stieltjes string for Gau{\ss} has a Dirichlet termination, the wave in the top panel of~\ref{fig:wave1000}-A is terminated at a node. Conversely, the Gau{\ss}--Radau approximation leads to a termination at an antinode, a Neumann condition.

For the case  $m=2000$ presented in Figure~\ref{fig:wave2000}, snapshots of all three approximate solutions show increased irregularity. This arises because the discrete spectra have begun to converge to the exact operator's spectrum; consequently, the approximations move closer to the exact solution and further from the desired open-domain behavior.

Due to the convergent part of the discrete spectra, they are closer to the exact solution in the rightmost column and further from the desired outgoing wave behavior. 
The cross-section results in the bottom row for the Gau{\ss} and Gau{\ss}--Radau again show standing waves similar to the case with $m=1000$ (the exact solution is the same for both figures).  The Kreĭn--Nudelman waves for $m=2000$ are still dominantly outgoing --- however, they have more irregularities due to convergent parts of the spectrum, contaminating the solution with standing waves. Eventually, as $m\to\infty$ the Kreĭn--Nudelman (as well as the other two approximate solutions) would converge to the exact solution, so it will again be just a superposition of standing waves. This change is consistent with the dependence of the scattering poles on $m$ shown in Figure~\ref{fig:lists}.

This situation is reminiscent of solving ill-posed linear systems by Krylov subspace iterations: while the fully converged solution can be highly sensitive to data errors, intermediate iterates often provide effective regularized approximations (see, e.g., \cite{Reichel2020SimpleSC}). We therefore can potentially adopt a similar strategy here, treating the  error of the discretization of the exterior domain as an error in the data.
For robust wave-propagation computations using the Kreĭn–Nudelman formulation, the number of iterations must be chosen carefully to avoid over-convergence. In particular, iterations should be terminated once the data misfit $\|\hat\blF-\blF\|$ becomes comparable to the boundary truncation  error in the operator $A$. This requires reliable estimates of both error sources to define an appropriate stopping criterion. Sharp estimates for the latter error using boundary truncation algorithm via so-called optimal grid are available \cite{idkGrids, druskin2016near} convergent grid coarsening.

\section{Conclusions and Outlook}

In this work, we have demonstrated that the approximation of matrix forms can be analyzed by rewriting the underlying Lanczos process as a matricial Stieltjes continued fraction. By interpreting this structure through the lens of block Stieltjes strings, we establish a direct correspondence between the algebraic truncation of the Lanczos recursion and the physical behavior of a vibrating block Stieltjes string. Specifically, we show that a block Lanczos approximation at a finite iterate corresponds to a truncated string, which inherently introduces spurious reflections at the computational boundary.

We find that classical quadrature rules are characterized by specific boundary conditions: Gau\ss\ quadrature induces a Dirichlet-type reflection, whereas Gau\ss--Radau quadrature induces a Neumann-type reflection. Consequently, the averaging of these two rules as studied in \cite{zimmerling2025monotonicity} can be understood as an attempt to cancel these opposing non-absorbing boundary effects, i.e. they correspond to a superposition of a string with an even and odd extension.

To go beyond these classical limits, we proposed an absorbing boundary condition inspired by the Kre\u{\i}n--Nudelman representation of rational Stieltjes functions. This boundary condition manifests as a damping term depending on the spectral parameter at the truncation point of the Stieltjes string, effectively introducing a branch cut in the resolvent. {\clr The damping parameter is selected by a port-Hamiltonian energy balance, in which the objective is the power absorbed at the truncation normalized by the stored energy (Appendix~\ref{sec:port}).} We term this method Kre\u{\i}n--Nudelman method and it effectively suppresses spurious reflections. This approach provides a superior framework for approximating matrices with almost continuous spectral distributions, particularly those arising from the discretization of PDE operators on unbounded domains. {\clr The gain comes from replacing the discrete Ritz approximation of a dense spectrum by a smooth spectral density.}

In the regime of linear convergence, both the Kre\u{\i}n--Nudelman extension and the averaging of Gau\ss/Gau\ss--Radau quadrature rules yield significant error reductions when computing transfer functions for large-scale 2D diffusion and 3D quasi-stationary Maxwell systems. While the Kre\u{\i}n--Nudelman approach consistently outperforms quadrature averaging in terms of absolute error, it exhibits a less regular convergence profile. We hypothesize that this loss of monotonicity may be mitigated by refining the dissipation maximization procedure that we use to determine the optimal Kre\u{\i}n--Nudelman damping parameter.

For wave propagation problems in unbounded domains, the Kre\u{\i}n--Nudelman extension provides the strongest acceleration and also a qualitative correction to the solution by transforming nonphysical standing waves into the expected outgoing waves. However, this phenomenon can suffer from over-convergence as Ritz values converge, and finding an optimal stopping criterion consistent with the discretization error is required to make it robust. 

{\clr 

Looking forward, the most direct extension is to a more general class of
terminators used in quantum physics to accelerate Lanczos recursions for Green's
function and spectral density calculations with continuum spectra, e.g.,
see~\cite{HaydockHeineKelly1972,Haydock1980Recursion,BeerPettifor1984Terminator,BeerPettifor1984NATOASI,LuchiniNex1987Stitching,PinnaLuntvonKeyserlingk2025Stitching}.
These assume that the Lanczos recursion converges to a stationary limit, so that
asymptotically it becomes a shifted Chebyshev recursion whose spectral function
is a Stieltjes continued fraction (S-fraction) with convergent coefficients.
Such an S-fraction can be truncated exactly by a function of the form
$b\sqrt{s(s-a)}$, e.g.\ the infinite S-fraction with coefficients $\gamma$ and
$s\hat\gamma$ gives
\be\label{eq:sqrt}
\frac{\gamma}{2} + \frac{1}{s\hat{\gamma} + \frac{1}{\gamma + \frac{1}{s\hat{\gamma} + \scalebox{0.5}{$\ddots$}}}} =  \frac{\gamma}{2s} \sqrt{s \left( s + \frac{4}{\gamma\hat{\gamma}} \right)},
\ee
hence the name square-root terminator, or recursion method. The
Kre\u{\i}n--Nudelman extension can be seen as a particular case of such a terminator. Unlike the square-root terminator, however, a
full square-root terminator can in principle resolve scattering poles on the
second Riemann sheet rather than only smoothing the spectral density, which is
what makes it attractive as a next step.

The recursion method was introduced in \cite{HaydockHeineKelly1972} to study the
electronic band structure of materials, and the square-root terminator
in~\cite{BeerPettifor1984Terminator}; see \cite{PettiforWeaire1985} for a
survey. In quantum-mechanical computations the coefficients of a Schr\"odinger
potential are often available analytically, so the Lanczos recursion can be
carried out in closed form but with exponentially growing complexity; attaching
a square-root terminator to the last computed fraction let the wave function
propagate out of the truncated region into an effective infinite medium, which
made otherwise intractable problems solvable. The approach has nonetheless seen
limited use in computational linear algebra, since the assumption of a
stationary recursion limit is restrictive and breaks down on intervals of
nonuniform spectral distribution, an issue compounded by the instability of the
simple Lanczos algorithm without reorthogonalization. We hope that our
damping optimization can supply the missing parameter selection in this more
general setting.

{\clr Two further alternatives are worth mentioning.} The ``stitching'' method
preconditions with the tail of a reference Lanczos recursion
\cite{PinnaLuntvonKeyserlingk2025Stitching} and is related to ours through the
Stieltjes moments it preserves; while it provides algebraic acceleration in
quantum scattering, preliminary estimates for wave scattering with compactly
supported perturbations suggest the potential for exponential or even
superlinear convergence. A different route is to represent the square-root
branch cut explicitly through the ``quadratic'' Pad\'e method
\cite{MayerNuttallTong1984QuadraticPade, MayerTong1985QuadraticPade,
Fasondini2019QuadraticPade}, itself a special case of the Hermite--Pad\'e method
\cite{Aptekarev, Suetin}. Whether either can be realized as a low-rank modification of a Lanczos recursion is not clear, and is itself an interesting question.

In this paper we focused on matrices $A$ obtained
from discretizations of PDE operators $A_\infty$ with continuous spectrum, and
treated the approximation of $A_\infty$ itself only qualitatively. Spectral
approximation of infinite-dimensional operators through convergent sequences of
discretizations has been studied by many authors, e.g., see
\cite{Kuijlaars2000, ColbrookHansen2023, BeckermannGuttel2010} and references
therein. For the exterior PDE problems considered here the task may in fact be
easier: the accuracy of the interior discretization need not change, only that
of the domain truncation, by taking as $A_\infty$ the operator whose grid
extends to infinity. That limit can be approximated with exponentially decaying
error and sharp bounds via optimal grids, or finite-difference Gau{\ss}ian
quadratures \cite{idkGrids, druskin2016near}, and applying such methods
rigorously to convergent sequences of matrices is research in progress.

Beyond the terminator itself, the framework invites generalization to matrix
forms $B^T f(A) B$ for wider classes of $f$, such as the matrix exponential or
Stieltjes functions, which could improve on existing estimates based on
Chebyshev series \cite{druskin1989two}. As with the original square-root
terminator, a natural application is the estimation of spectral densities for
large matrices
\cite{UbaruChenSaad2017SLQ,ColbrookHorningTownsend2021SIAMRevSpectralMeasures,YiMassattHorningLuskinPixleyKaye2025DeltaChebyshev},
where an intrinsically continuous representation of the spectral measure has a
clear advantage over discrete approximations. We likewise anticipate that the
approach extends to other problems exhibiting linear convergence, such as
transfer functions of large-scale graph Laplacians
\cite{zimmerling2025monotonicity}. Randomized enrichment of the initial block is
known to reduce the number of block Lanczos iterations
\cite{ChenEpperlyMeyerMuscoRao2026,PerssonChenMusco2025}, in particular combined
with Gau{\ss} and Gau{\ss}--Radau averaging \cite{zimmerling2025monotonicity}, so
we expect the more strongly adaptive acceleration proposed here to yield larger
reductions still, and combining it with randomized trace estimators
\cite{AvronToledo2011RandomizedTrace,PerssonChenMusco2025} is another direction
for future work.

}


\bmhead{Acknowledgements}

Vladimir Druskin was partially supported by AFOSR
grants FA 9550-20-1-0079, FA9550-23-1-0220,  NSF
grant DMS-2110773 and Texas at Austin’s Deep Imaging Sub-Consortium (of the Research Consortium on Formation Evaluation),
currently sponsored by AkerBP, Baker-Hughes, bp, ENI, Equinor ASA, Halliburton, and Petrobras.  The computations were enabled
by resources in project UPPMAX 2025/2-271 provided by Uppsala University at UPPMAX. Visualizations were created using scientific colormaps by Fabio Crameri~\cite{Crameri2023ScientificColormapsPackage}.

\section*{Ethics declarations}
\bmhead{Conflict of interest}
The authors declare that they have no conflict of interest regarding the publication of this paper.

\section*{Data Availability}
Data sharing is not applicable to this article as no datasets were generated or analyzed during the current study.

\begin{appendices}

\section{Extraction of the Stieltjes parameters from the Lanczos block tridiagonal matrix }\label{ap:A}

This appendix is included for completeness, for derivation, see \cite{ZaslavskySfraction,zimmerling2025monotonicity}. From the construction of $\Alpha_i$'s and $\Beta_i>0$ in Algorithm~\ref{alg:blockLanc} we obtain that $\hKappa_i$, $\gam_{i}$ are full rank, as long as no deflation occurs in the block Lanczos recurrence, as $\Beta_i^T\Beta_i>0$ for $i=2,\ldots,m$, i.e. nonsingularity of all coefficients $\Alpha_i,\Beta_i$ ensures that 
Algorithm~\ref{alg:GammaExtraction} runs without breakdowns. { For some scalings of problems, the extraction can be unstable if $\hgam_i$ or $\gam_i$ run off to zero and infinity, respectively, even though their product is stable. In such cases, it is possible to extract the parameters from $\delta\Alpha_i$ and $\Beta_i$ by imposing that $T_i-E_i\delta\Alpha_i E_i^T$ has a $ p$-dimensional nullspace. From this condition, $\delta \Alpha_i$ can be obtained using a $(i-1)\times p$ dimensional block tridiagonal solve with $p$ right-hand sides, by eliminating the last block row.  }
 
 \begin{center}
\begin{minipage}{.55\linewidth}
 \begin{algorithm}[H]
 	\caption{Extraction algorithm $\gam/\hKappa$:\\ (Block $LDL^T$ Cholesky factorization of block tridiagonal $T_m$)}\label{alg:GammaExtraction}
 	\begin{algorithmic}
 	\normalsize
 		\State Given $\Alpha_i,\Beta_i$ and $\hKappa_1=I_p$ (since $\Beta_1=I_p$)
 		\State $\gam_1^{-1}=\hKappa_{1}^T \Alpha_1 \hKappa_{1}$ 		
		 \For{$i= 2,\dots, m$} 
 		\State $\hKappa_{i}^{-1}\quad	={ -} \gam_{i-1}(\hKappa_{i-1})^T\Beta_i^T$\Comment{$(*)$}
 		\State $\gam_i^{-1}\quad=\phantom{-} (\hKappa_{i})^T \Alpha_i \hKappa_{i}- \gam_{i-1}^{-1}$\Comment{$(\dagger)$} 
 		\EndFor 
  		
 	\end{algorithmic}
 	\label{alg:ExtractGam}
 \end{algorithm}
 \end{minipage}
 \end{center}
 Derivations of different variants of the Algorithm~\ref{alg:GammaExtraction} are given in \cite{zimmerling2025monotonicity,Druskin2016}.
 We should point out that after $m$ steps of the block Algorithm~\ref{alg:blockLanc}
 one also can obtain $\Beta_{m+1}$ without additional mutiplication by $A$, which allows us also to recover $\hKappa_{m+1}$ and $\hgam_{m+1}$.
 \section{Continued fraction interpretation}\label{ap:ContFrac}
 \subsection{Gau{\ss}ian quadrature}
 \begin{proposition}
 The Gau{\ss}ian quadrature can be written as a truncated matricial Stieltjes continued fraction (S-fraction) :
\be\label{eq:S-fraction1}
	\blF_m(s)= \cfrac{1}{s\hgam_1+ \cfrac{1}{\gam_1 + \cfrac{1}{s\hgam_2 +\cfrac{1}{ \ddots \cfrac{1}{s \hgam_m + \cfrac{1}{ \gam_m}} }}}} ,
\ee
where the basic building block of these S-fractions is given by the backward recursion
\be\label{eq:DefCFblock}
	\blC_i(s)=\cfrac{1}{s\hgam_i + \cfrac{1}{\gam_i+\blC_{i+1}(s)}}, \quad \blC_{m+1}(s) = 0,
\ee
with $\blC_i(s)\in\CC^{p\times p}$, $i=1,\ldots, m$ being s.p.d. for real positive $s$.

\end{proposition}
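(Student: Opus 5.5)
We would prove the S-fraction representation~\eqref{eq:S-fraction1} by eliminating the unknowns of the block Stieltjes string~\eqref{eqn:line1}--\eqref{eqn:linem} from the far end toward $i=1$. By~\eqref{eq:Gauss}, $\blF_m(s)=\blU_1$, where $U_m$ solves~\eqref{eq:LAform}, equivalently the string system. The natural objects are the block ``impedances''
\[
\blC_i(s):=\blU_i\,\bigl[-\tfrac{1}{\gam_{i-1}}(\blU_i-\blU_{i-1})\bigr]^{-1},
\]
the right ratio of displacement to incoming flux at node $i$. Then $\blC_1=\blU_1$, because $\gam_0=I_p$ and \eqref{eqn:line1} gives incoming flux $I_p$. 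The claim reduces to showing that these $\blC_i$ satisfy the backward recursion~\eqref{eq:DefCFblock} with $\blC_{m+1}=0$.

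The recursion is verified in three steps.
\begin{enumerate}
\item Set $\mathcal{P}_{i}:=-\gam_{i}^{-1}(\blU_{i+1}-\blU_i)$, the flux on link $i$. By~\eqref{eqn:linem}, $\blU_{m+1}=0$, hence $\blC_{m+1}=\blU_{m+1}\mathcal{P}_m^{-1}=0$.
\item Rewrite the link relation as $\blU_i=\blU_{i+1}+\gam_i\mathcal{P}_i$. Inserting the definition of $\blC_{i+1}$ gives
\[
\blU_i\mathcal{P}_i^{-1}=\gam_i+\blC_{i+1}.
\]
\item Write the node equation~\eqref{eq:linei} as $\mathcal{P}_{i-1}=\mathcal{P}_i+s\hgam_i\blU_i$. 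Multiplying on the right by $\blU_i^{-1}$ gives
\[
\blC_i^{-1}=s\hgam_i+(\gam_i+\blC_{i+1})^{-1},
\]
which is exactly~\eqref{eq:DefCFblock}.
\end{enumerate}
Unrolling the recursion from $i=m$ down to $i=1$ yields~\eqref{eq:S-fraction1}. The right-inversion convention $\tfrac{\Alpha}{\Beta}=\Alpha\Beta^{-1}$ matches the order of these products, because all multiplications by the $p\times p$ blocks are carried out on the right.

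The main obstacle is justifying the inversions for $s>0$, which also gives the s.p.d.\ claim. We argue by backward induction. Assume $\blC_{i+1}$ is symmetric positive semidefinite; this holds for $\blC_{m+1}=0$. Then $\gam_i+\blC_{i+1}$ is s.p.d., since $\gam_i$ is s.p.d. Its inverse is therefore s.p.d., and adding $s\hgam_i>0$ keeps the sum s.p.d. Hence $\blC_i$ is s.p.d., and every inverse in the recursion exists for real $s>0$.

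To avoid inverting the individual $\blU_i$, we can instead define $\blC_i$ directly by the recursion~\eqref{eq:DefCFblock}. We then construct a solution of the string with $\blU_{i}=\blC_{i}\mathcal{P}_{i-1}$, choosing $\mathcal{P}_0=I_p$ and propagating forward. By uniqueness of the solution of~\eqref{eq:LAform}, where $Z_m+s\widehat\Gamma_m$ is s.p.d., this solution is the true one, so $\blF_m=\blC_1$.

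For complex $s$ off $\RR_-$ the identity extends by analytic continuation. Both sides are rational in $s$ and agree on $\RR_+$.
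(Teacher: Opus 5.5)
Your proof is correct and follows essentially the paper's route: the $\blC_i$ are the block Neumann-to-Dirichlet maps of the tail string, the backward recursion for $\blC_i$ comes from combining the link and node equations, and symmetric positive definiteness for $s>0$ follows by backward induction from $\blC_{m+1}=0$. The only difference is bookkeeping. The paper makes the maps well defined through the triangular family of tail problems $\blUi{i}{j}$ with unit Neumann data at node $j$, whereas your forward sweep $\blU_i=\blC_i\mathcal{P}_{i-1}$, together with uniqueness for the s.p.d. pencil, achieves the same without inverting any $\blU_i$ or $\mathcal{P}_i$.
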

The proof for $p=1$ was known in Stieltjes work or even earlier, and its extension for $p\ge 1$ is given in \cite{zimmerling2025monotonicity}. Here, for completeness, we present its simplified variant.

 \begin{proof}
To prove this statement we use induction. At every step of the induction, we increase the continued fraction and solve a block tridiagonal pencil problem that increases by one block each step and coincides with the full pencil $(Z_m+s \widehat{\Gamma}_m)$ at the last step. To facilitate this we define $\blUi{i}{j}$ as a (triangular) family of $p \times p$ real matrices
\[
\{\blUi{i}{j} \in \mathbb{R}^{p \times p} | j=1,\dots,m \,\,, \,\, i=j,\dots,m\},
\]
where $i$ corresponds to the blocks coupled through the three-term recurrence relation defined by the pencil $(Z_m+s \widehat{\Gamma}_m)$ and $j$ corresponds to the steps of the induction where at the $j$-th step we enforce the (block Neumann) boundary condition 
\be\label{eq:BC}
-{\gam^{-1}_{j-1}{(\blUi{j}{j}-\blUi{j-1}{j})}}=I_p.
\ee
This condition is enforced on the last $m-j$ blocks of the three term recurrence relation defined by the $m-j$ last block rows of $(Z_m+s \widehat{\Gamma}_m)$ which can be written out as
\begin{align}
-{\gam^{-1}_{i}}{(\blUi{i+1}{j}-\blUi{i}{j})} +{\gam^{-1}_{i-1}{(\blUi{i}{j}-\blUi{i-1}{j})}} + s\hgam_i \blUi{i}{j} &=0, 	\quad i=j,\dots,m,\,\, j=1,\dots,m \label{eq:REC} \\
&{\quad} \blUi{m+1}{j}=0.
\end{align}
Note that for $j=1$ this corresponds together with \eqref{eq:BC} to the full pencil problem $(Z_m+s \widehat{\Gamma}_m)[(\blUi{1}{1})^T, \dots, (\blUi{m}{1})^T ]^T=E_1$, which yields $\blF_m=\blUi{1}{1}$.\\

Further, we introduce the notation
\be
\blC_j = \blUi{j}{j}
\ee 
which due to condition \eqref{eq:BC} provides the block Neumann-to-Dirichlet map
\be\label{eq:BNTD}
\blC_i [-{\gam^{-1}_{i-1}{(\blUi{i}{j}-\blUi{i-1}{j})}}] = \blUi{i}{j} \quad \forall i\geq j
\ee
since $\blU_{i\geq j}^{j}$ satisfies the same recursion $\forall j$. This holds because the sub-pencil for the tail $i,\dots,m$ is invariant: the variables $\blUi{i}{j}$ on this tail satisfy the same recurrence and boundary conditions as the intrinsic problem defining $\blC_i$, regardless of the upstream driving index $j$.

As shown later during the induction, these maps are positive definite and thus invertible $\forall s\in\mathbb{C}\setminus(-\infty,0)$ and the inverse gives
\begin{align}
-{\gam^{-1}_{i-1}{(\blUi{i}{j}-\blUi{i-1}{j})}}&=\blC_i^{-1} \blUi{i}{j} \quad\forall i\geq j\\
\leftrightarrow \blUi{i-1}{j}&=(\blC_i+\gam_{i-1})\blC_i^{-1} \blUi{i}{j}
\end{align}
{\clr which for $i=j+1$ and assuming $\gam_{j}>0$ and invertibility of $\blC_{j+1}$ gives}
\be\label{eq:ReccursionStart}
\blC_{j+1}^{-1} \blUi{j+1}{j}=(\blC_{j+1}+\gam_{j})^{-1}\blUi{j}{j}=-{\gam^{-1}_{j}}{(\blUi{j+1}{j}-\blUi{j}{j})}.
\ee

With this in place consider the recursion \eqref{eq:REC} for $i=j$, substitute the condition \eqref{eq:BC} and replace the term $-{\gam^{-1}_{j}}{(\blUi{j+1}{j}-\blUi{j}{j})}$ with the expression derived in \eqref{eq:ReccursionStart} to relate $\blC_{j+1}$ to $\blC_{j}$ as
\be
(\blC_{j+1}+\gam_{j})^{-1}\blC_j + s \hgam_j s \blC_j =I_p
\ee
which gives the basic building block of the material S-fraction as
\be\label{eq:SFracRecBlock}
\blC_j(s)=\cfrac{1}{s\hgam_j + \cfrac{1}{\gam_j+\blC_{j+1}(s)}} ,
	\qquad {\rm Re}\{s\}>0.
\ee

We are now ready to perform the induction, where we show that indeed $\blC$'s are invertible and that the upper recursion gives the desired S-fraction for $\blF_m$

\begin{itemize}
		\item \emph{Base Case}: For the case $j=m$ the condition $\blUi{m+1}{m}=0$ implies $\blC_{m+1}=0$ in \eqref{eq:SFracRecBlock}. Alternatively Equation~ \eqref{eq:REC} for $i=j=m$ can be written as 
\be
\gam_{m}^{-1}\blC_m + s \hgam_m \blC_m =I_p
\ee
which starts the S-fraction as
\be
\blC_m(s)=\cfrac{1}{s\hgam_m + \cfrac{1}{\gam_m}} , \qquad {\rm Re}\{s\}>0.
\ee
which is positive definite and thus invertible.
	\item \emph{Induction Step}: 
	Next for $m>j \geq 1$ we assume that $\blC_{j+1}$ is positive definite such that
\[
\blC_j(s)=\cfrac{1}{s\hgam_j + \cfrac{1}{\gam_j+\blC_{j+1}(s)}} 
\]
extends the S-fraction and ensures that $\blC_j$ is positive definite and invertible. The recursion ends with $j=1$ at which point \eqref{eq:BC} and \eqref{eq:REC} coincide with the full pencil.		 
\end{itemize}

\end{proof}
\subsection{Extension to Gau{\ss}--Radau and Kreĭn--Nudelman quadratures}\label{ap:ext}

To obtain the continuous fraction expression for $ {\hat\blF^{{\bphi}}}_m(s)$, we return to the Gau{\ss} quadrature.
Comparing equations~(\ref{eqn:line1}-\ref{eqn:linem}) with \eqref{eq:DefCFblock} we obtained \eqref{eq:BNTD}, the expression
$$
\blC_i^{-1} \blU_i:= - \frac 1 {\gam_{i-1}} (\blU_i-\blU_{i-1}).
$$
In the finite-difference interpretation of the block Stieltjes string $\blC_i$ connects the solution $\blU_i$ with its backward differences, i.e., $\blC_i^{-1}$ can be interpreted as the block Dirichlet-to-Neumann operator.

Condition \eqref{eqn:linem} yields the last condition of \eqref{eq:DefCFblock}. Following \cite{KreinNudelman1973,KreinNudelman1989}, and extending the Dirichlet-to-Neumann interpretation, we replace it with the ``impedance boundary condition'' 
\be\label{eq:KN}
\blC_{m+1}^{-1} := {(\bphi\sqrt{s})}
\ee
thus we obtain
\be\label{eq:S-fraction2}
	\hat \blF_m^{{\bphi}} (s)= \cfrac{1}{s\hgam_1+ \cfrac{1}{\gam_1 + \cfrac{1}{s\hgam_2 +\cfrac{1}{ \ddots \cfrac{1}{s \hgam_m + \cfrac{1}{ \gam_m+\cfrac{1}{\bphi\sqrt{s}}}}}}}} .\ee
Similar to the limiting transitions written in terms of the $\hat T^{{\bphi}}_m(s)$, we can write in terms of the continued fraction
\be\label{eq:lim1}\lim_{{{\bphi}}\to \infty}\hat \blF^{{\bphi}}_m(s)= \blF_m(s)\ee and
\be\label{eq:lim2}
	\lim_{\bphi\to 0}\hat \blF^{{\bphi}}_m(s)= \cfrac{1}{s\hgam_1+ \cfrac{1}{\gam_1 + \cfrac{1}{s\hgam_2 +\cfrac{1}{ \ddots \cfrac{1}{s \hgam_m } }}}} = \blTF_m(s),\ee
 where $\blTF_m(s)$ is the block Gau{\ss}--Radau quadrature as defined in \cite{zimmerling2025monotonicity}.

 \subsection{Proof of Proposition~\ref{prop:main}}\label{sec:ProofProp1}
 Using the equations introduced in this appendix, we can now proceed with the proof of Proposition~\ref{prop:main}.
 \begin{proof}
 To prove the first statement, we first notice that $\sqrt{s}$ is a two-valued Stieltjes function with the branch cut on $\RR_-$. Then $\blC_i$ are M\"obius transforms of $\blC_{i-1}$, so they recursively map the  complex plane to itself and a Stieltjes function to another Stieltjes function. Thus recursively, the branch cut of $\sqrt{s}$ is mapping to the branch cut with the same location.
 
The second statement is proved similarly to equation~\eqref{bound:GR} in \cite{zimmerling2025monotonicity}. We write the counterpart of the recursion \eqref{eq:DefCFblock} for $\hat \blF^{\bphi}_m(s)$ as
\begin{flalign*}
\hat \blF^{\bphi}_m(s)=\blC_{1}(s), \\ \blC_i(s)=\cfrac{1}{s\hgam_i + \cfrac{1}{\gam_i+\blC_{i+1}(s)}}, \quad i=m,m-1,\ldots, 1, \\ \blC_{m+1}(s) = (\bphi\sqrt{s})^{-1}. \end{flalign*}
{By construction $\blC_{m+1}$ is strictly monotonic with respect to s.p.d. ${\bphi}^{-1}$ for $s\in\mathbb{R}^+$, and then $\blC_i$'s are strictly monotonic with respect to $\blC_{i+1}$ for $i=m,m-1,\ldots, 1$, which gives strict monotonicity of $\hat \blF^{\bphi}_m(s)$ with respect to ${\bphi}^{-1}$. The Gau{\ss} quadrature is obtained in \eqref{eq:lim1} as the limit for ${\bphi}^{-1}\to 0$ so it gives the lower bound, and \eqref{eq:lim2} as the limit for $\bphi^{-1}\to \infty$, so we obtain the Gau{\ss}--Radau quadrature as the upper bound.}
\end{proof}

\begin{remark}
{To extend the first statement of Proposition~\ref{prop:main} to $p>1$ one needs to extend the M\"obius transform to matricial M\"obius transforms (a.k.a. Linear Fractional Transformations (LFTs)). Then one needs to work with matrix-complex planes, which leads to a similar reasoning; however we leave a complete proof for future work.

 The extension of the second statement of Proposition~\ref{prop:main} to $p>1$ can be obtained by applying the monotonicity reasoning to the matrix-valued argument. {Let} $\blF_m(s,\blC_{m+1})=\blC_{1}(s)$ {be the lower recursion truncated by $\blC_{m+1}$}
\begin{eqnarray*}
 \blC_i(s)=\cfrac{1}{s\hgam_i + \cfrac{1}{\gam_i+\blC_{i+1}(s)}}, \quad i=m,m-1,\ldots, 1,\end{eqnarray*}
 We consider three truncating values
 \be
 \blC^{\scalebox{0.5}[0.4]{\rm Gau{\ss} }}_{m+1}(s) = 0 < \blC^{\scalebox{0.5}[0.4]{\rm Nudelman}}_{m+1}(s) = ({\bphi}\sqrt{s})^{-1} < \blC^{\scalebox{0.5}[0.4]{\rm Gau{\ss}--Radau}}_{m+1}(s) = \infty . 
 \ee
From 
\be
\cfrac{1}{s\hgam_i + \cfrac{1}{\gam_i+\blC_{i+1}(s)}} < \cfrac{1}{s\hgam_i + \cfrac{1}{\gam_i+\blC_{i+1}(s)+{\boldsymbol\tau}}}, \text{ for ${\boldsymbol\tau} >0$}
\ee
it directly follows that
\begin{eqnarray*}
\blF_m(s,\blC^{\scalebox{0.5}[0.4]{\rm Gau{\ss} }}_{m+1}(s))< \blF_{m+1}(s,\blC^{\scalebox{0.5}[0.4]{\rm Gau{\ss} }}_{m+2}(s))< \blF_m(s,\blC^{\scalebox{0.5}[0.4]{\rm Nudelman}}_{m+1}(s))\\< 
\blF_{m+1}(s,\blC^{\scalebox{0.5}[0.4]{\rm Gau{\ss}--Radau}}_{m+2}(s)) <
\blF_m(s,\blC^{\scalebox{0.5}[0.4]{\rm Gau{\ss}--Radau}}_{m+1}(s)).
\end{eqnarray*}

}
\end{remark}
\section{Extended Kreĭn--Nudelman string}\label{ap:extended}
As noted before, computation of $\Beta_{m+1}$ at the $m$-th step of the block Lanczos algorithm does not require an additional matrix multiplication, allowing us to compute $\hgam_{m+1} =\hKappa_{m+1}^T \hKappa_{m+1}$ according to Algorithm~\ref{alg:GammaExtraction}. For that,  we extend equation~\eqref {eq:linei} by appending an equation for $i=m+1$ as 
 \be\label{eq:linei+1} 
 \frac 1 {\gam_{m}}\left(\blU_{m+1} -\blU_{m}\right) 
 - \frac 1 {\bxi} \left(\blU_{m+2}-\blU_{m+1}\right)+s\hat\gam_{m+1} \blU_{m+1}=0,
 \ee
replacing \eqref{eq:sommerfeld} with
\be\label{eq:sommerfeld+1}
({\sqrt{s}}{\bphi} ) \blU_{m+2}= - \frac 1 {\bxi} (\blU_{m+2}-\blU_{m+1}),
\ee
 where $\bxi\in\RR^{p\times p}$ is a s.p.d. matrix.

We call the system (\ref{eqn:line1},\ref{eq:linei}, \ref{eq:linei+1},\ref{eq:sommerfeld+1}) the ``Extended Kreĭn--Nudelman string''.
We will define the extended Kreĭn--Nudelman quadrature  as
\[\blF_m^{{\bphi},{\bxi}}=\blU_1,\]
with  $\blU$ satisfying (\ref{eqn:line1},\ref{eq:linei}, \ref{eq:linei+1},\ref{eq:sommerfeld+1}).
 Combining and symmetrizing this matrix pencil, we obtain
 \be\label{eq:KreinNudelman+1} 
 \hat\blF_m^{{\bphi},{\bxi}}(s)=E_1^T(\hat T_m^{{\bphi},{\bxi}}(s)+sI)^{-1}E_1,
 \ee
 where $\hat T_m^{{\bphi},{\bxi}}(s)\in\RR^{p(m+1)\times p(m+1)}$ can be represented as
 \be\label{eq:extT}
 \hat T_m^{{\bphi},{\bxi}}=\begin{pmatrix}
     T_m&\Beta_{m+1}^T\\
     \Beta_{m+1}& \hat \Alpha_{m+1}^{{\bphi},{\bxi}}(s)
 \end{pmatrix},
 \ee
 with
\be\label{eq:hatal+}\hat \Alpha_{m+1}^{{\bphi},{\bxi}}(s)=(\hKappa_{m+1})^{-T}[\gam_m^{-1}+\bxi^{-1}-\bxi^{-1}(\bxi^{-1}+\sqrt{s}\bphi)^{-1}\bxi^{-1}]( \hKappa_{m+1})^{-1}.\ee
Now, it is easy to see that we can obtain Gau{\ss}--Radau approximant $\tilde \blF_{m+1}(s)=\lim_{\bxi\to\infty}\hat\blF_m^{{\bphi},{\bxi}}(s)$, thus 
$\hat \Alpha_{m+1}^{{\bphi},\infty}(s)=(\hKappa_{m+1})^{-T}\gam_m^{-1}(\hKappa_{m+1})^{-1}$ and
\be\label{eq:hatal+e}\delta \Alpha(s)=\hat \Alpha_{m+1}^{{\bphi},\bxi}(s)-\hat \Alpha_{m+1}^{{\bphi},\infty}(s)= (\hKappa_{m+1})^{-T}[\bxi^{-1}-\bxi^{-1}(\bxi^{-1}+\sqrt{s}{\bphi})^{-1}\bxi^{-1}](\hKappa_{m+1})^{-1}\ee
is an s.p.d. matrix if $\bphi,\bxi$ are s.p.d. \, .

\section{Port-Hamiltonian interpretation of the Kre\u{\i}n--Nudelman extension}
\label{sec:port}
{\clr
In this appendix we derive the objective function~\eqref{eq:Optimize} used to
select $\bphi$. We first rewrite the block Stieltjes string~\eqref{eq:LAform}
as a first-order (port-Hamiltonian) system and identify its electric and
magnetic energy. Adding the Kre\u{\i}n--Nudelman termination then produces a
single dissipative port, and the active and reactive power of that port are
read off from $\hat\blF^{\bphi}_m(s)$. We then formulate the objective as one of maximizing active power relative to the stored energy.

Throughout this appendix $s\in\RR_-$, and we fix the branch of the square root
so that \emph{$\sqrt{s}$ is purely imaginary with positive imaginary part}.
Consequently
\be\label{eq:branch}
\overline{\sqrt{s}}=-\sqrt{s},
\qquad
|\sqrt{s}|^{2}=|s|=-s,
\qquad
\frac{1}{\sqrt{s}}=-\frac{\sqrt{s}}{|s|}.
\ee
For a Hermitian positive semi-definite $M\in\CC^{mp\times mp}$ and
$V\in\CC^{mp\times p}$ we write
\be\label{eq:normdef}
\|V\|^{2}_{M}:=\mathrm{tr}\!\left(V^{H}MV\right)=\|M^{1/2}V\|^{2}_{F},
\ee
i.e.\ in the MIMO case the energy of all $p$ states is summed. This is technically only a norm for positive-definite $M$, since there may exist $V$ in the null space of $M$.

\subsection{First-order form}

Let $U=[\blU_1;\dots;\blU_m]$ solve the pencil system~\eqref{eq:LAform},
$(Z_m+s\widehat{\Gamma}_m)U=E_1$ with
$Z_m=J_m\Gamma_m^{-1}J_m^{T}$. Introducing the dual variable
$\hat U=[\hat\blU_1;\dots;\hat\blU_m]$ through
\be\label{eq:dual}
\Gamma_m^{-1}J_m^{T}U=\sqrt{s}\,\hat U,
\quad\text{i.e.}\quad
-\frac{1}{\gam_i}\left(\blU_{i+1}-\blU_i\right)=\sqrt{s}\,\hat\blU_i,
\quad i=1,\dots,m, \text{ with } \blU_{m+1}=\mathbf{0}
\ee
and dividing the first equation by $\sqrt{s}$ gives the first-order form
\be\label{eq:NoLossPH}
\left[
\begin{pmatrix} 0 & J_m\\ -J_m^{T} & 0\end{pmatrix}
+\sqrt{s}
\begin{pmatrix} \widehat{\Gamma}_m & 0\\ 0 & \Gamma_m
\end{pmatrix}
\right]
\begin{pmatrix} U\\ \hat U\end{pmatrix}
=\begin{pmatrix}\dfrac{E_1}{\sqrt{s}}\\[2pt] 0\end{pmatrix},
\ee
where we used $\hKappa_1=I_p$, hence $\widehat{K}_m^{T}E_1=E_1$.
Here $\Gamma_m^{-1}J_m^{T}$ and $-J_m\,$ act as finite-difference
approximations of the derivative, and $\sqrt{s}$ is the Laplace variable of
Remark~\ref{rem:oneLap}. Equation~\eqref{eq:NoLossPH} is a port-Hamiltonian
system: a real skew-symmetric structure matrix plus $\sqrt{s}$ times a real
symmetric positive definite energy matrix, driven through a single port $E_1$.

\noindent It is convenient to keep both the pencil variable $U$ and the variable
\be\label{eq:Xdef}
X=\widehat{K}_mU,
\qquad
(T_m+sI)X=E_1,
\qquad
\blF_m(s)=E_1^{T}X=E_1^{T}U=\blU_1 .
\ee
The system~\eqref{eq:NoLossPH} may be read either as the mass--spring--damper
string of Kre\u{\i}n--Nudelman theory or as an LC transmission line, with
$\blU_i$ the voltage across a capacitor $C_i=\hgam_i$ and $\hat\blU_i$ the
current through an inductor $L_i=\gam_i$. We use the latter for illustration,
see Figure~\ref{fig:tikzexample}.

\begin{figure}[ht]
  \centering
  \includegraphics[width=\textwidth]{\pathfig/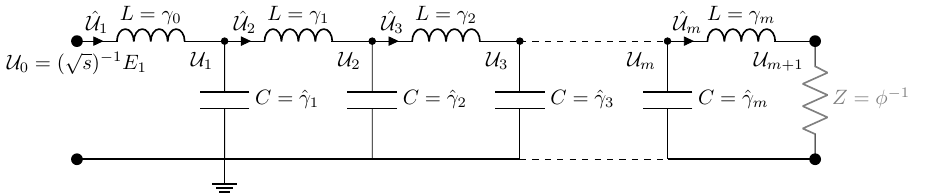}
  \caption{Embedding of the block tridiagonal system~\eqref{eq:Ph_phi} as an LC
  transmission line truncated by a single resistor. We give the values of the
  electrical impedance; equivalently, the line can be built using inductors
  $L_i=\gam_i,\ i=0,\dots,m$, capacitors $C_i=\hgam_i,\ i=1,\dots,m$, and a
  single terminating resistor.}
  \label{fig:tikzexample}
\end{figure}

\subsection{Energy balance of the lossless network (lossless string)}\label{sec:portDEnergy}

Multiplying~\eqref{eq:NoLossPH} from the left by $[U^{H}\ \hat U^{H}]$ and
taking traces gives, with $U^{H}E_1=(E_1^{T}U)^{H}=\blF_m(s)^{H}$,
\be\label{eq:balance0}
\frac{1}{\sqrt{s}}\,\mathrm{tr}\!\left(\blF_m(s)^{H}\right)
=\Big[\mathrm{tr}\big(U^{H}J_m\hat U\big)-\mathrm{tr}\big(\hat U^{H}J_m^{T}U\big)\Big]
+\sqrt{s}\left(\|U\|^{2}_{\widehat{\Gamma}_m}
+\|\hat U\|^{2}_{\Gamma_m}\right).
\ee
The square bracket is the energy exchanged between capacitors and inductors, since
$J_m$ is real it equals $c-\bar c$ with $c=\mathrm{tr}(U^{H}J_m\hat U)$ and is
therefore purely imaginary. The two remaining terms are the stored electric
and magnetic energy,
\be\label{eq:energies0}
2W_e:=\|U\|^{2}_{\widehat{\Gamma}_m}=\|X\|^{2}_{F},
\qquad
2W_m:=\|\hat U\|^{2}_{\Gamma_m}
=\frac{1}{|s|}\,\|J_m^{T}U\|^{2}_{\Gamma_m^{-1}}
=\frac{1}{|s|}\,\|X\|^{2}_{T_m},
\ee
where the first identity uses $\hgam_j=\hKappa_j^{T}\hKappa_j$ (i.e. 
$\widehat{\Gamma}_m=\widehat{K}_m^{T}
\widehat{K}_m$), and the second uses~\eqref{eq:dual},
$|\sqrt{s}|^{2}=|s|$ and the decomposition~\eqref{eq:defT}. The Hamiltonian is
$\mathcal{H}=W_e+W_m$ and the total stored energy is
\be\label{eq:Etot0}
E_{\rm tot}(s)=2\left(W_e+W_m\right)
=\|X\|^{2}_{F}+\frac{1}{|s|}\,\|X\|^{2}_{T_m}.
\ee
Finally, by~\eqref{eq:dual} and~\eqref{eq:branch},
$\mathrm{tr}(U^{H}J_m\hat U)=\mathrm{tr}\big((J_m^{T}U)^{H}\hat U\big)
=\overline{\sqrt{s}}\,\|\hat U\|^{2}_{\Gamma_m}
=-\sqrt{s}\,(2W_m)$, so the bracket in~\eqref{eq:balance0} equals
$-2\sqrt{s}\,(2W_m)$ and
\be\label{eq:balance0final}
\frac{1}{\sqrt{s}}\,\mathrm{tr}\!\left(\blF_m(s)^{H}\right)
=\sqrt{s}\,\big(2W_e-2W_m\big).
\ee
The right-hand side is purely imaginary, i.e. the Dirichlet-terminated string ($\blU_{m+1}=\mathbf{0}$) stores
energy but does not dissipate any. Note that~\eqref{eq:balance0final} contains
only the \emph{difference} $W_e-W_m$, i.e. the reactive power. The total stored
energy~\eqref{eq:Etot0} is not visible in the value of $\blF_m$ at a single $s$,
but it is available from its derivative. For $s\in\RR_-$ away from the poles of
$\blF_m$ the matrix $T_m+sI$ is real and invertible, hence $X$ is real, and
since $T_m$ does not depend on $s$,
\be\label{eq:dF}
\blF_m'(s)=-E_1^{T}(T_m+sI)^{-2}E_1=-X^{T}X,
\qquad\text{so that}\qquad
2W_e=-\mathrm{tr}\,\blF_m'(s).
\ee
Substituting $T_mX=E_1-sX$ into~\eqref{eq:energies0} gives
$2W_m=2W_e+|s|^{-1}\,\mathrm{tr}\,\blF_m(s)$, which is~\eqref{eq:balance0final}
once more, and adding the two contributions yields
\be\label{eq:foster}
E_{\rm tot}(s)=-2\,\mathrm{tr}\,\blF_m'(s)
+\frac{1}{|s|}\,\mathrm{tr}\,\blF_m(s).
\ee

\subsection{The Kre\u{\i}n--Nudelman termination as a dissipative port}

We now replace the Dirichlet condition $\blU_{m+1}=\mathbf{0}$ (see~\eqref{eqn:linem}) by the generalized outflow condition
\be\label{eq:outflow}
f(s)\,\blU^{\bphi}_{m+1}
=-\frac{1}{\gam_m}\left(\blU^{\bphi}_{m+1}-\blU^{\bphi}_{m}\right)
=\sqrt{s}\,\hat\blU^{\bphi}_{m},
\ee
and take $f(s)=\bphi\sqrt{s}$ as in the main text. Eliminating
$\blU^{\bphi}_{m+1}$ from~\eqref{eq:outflow} yields
$\blU^{\bphi}_m=\sqrt{s}\big(\gam_m+(\sqrt{s}\bphi)^{-1}\big)\hat\blU^{\bphi}_m$,
that is, the substitution $\gam_m\mapsto\gam_m+(\sqrt{s}\bphi)^{-1}$. We
therefore introduce the block diagonal matrix
\be\label{eq:GamPhi}
\Gamma^{\bphi}_m(s)
:=\Gamma_m+\frac{1}{\sqrt{s}}E_m\bphi^{-1}E_m^{T},
\ee
which coincides with $\Gamma_m$ except for its last block
$\gam^{\bphi}_m(s)=\gam_m+(\sqrt{s}\bphi)^{-1}$. Since
$\sqrt{s}\,\Gamma^{\bphi}_m
=\sqrt{s}\,\Gamma_m+E_m\bphi^{-1}E_m^{T}$, the
system~\eqref{eq:NoLossPH} becomes
\be\label{eq:Ph_phi}
\left[
\begin{pmatrix} 0 & J_m\\ -J_m^{T} & 0\end{pmatrix}
+\begin{pmatrix} 0 & 0\\ 0 & E_m\bphi^{-1}E_m^{T}\end{pmatrix}
+\sqrt{s}
\begin{pmatrix} \widehat{\Gamma}_m & 0\\ 0 & \Gamma_m
\end{pmatrix}
\right]
\begin{pmatrix} U^{\bphi}\\ \hat U^{\bphi}\end{pmatrix}
=\begin{pmatrix}\dfrac{E_1}{\sqrt{s}}\\[2pt] 0\end{pmatrix}.
\ee
This is the standard dissipative port-Hamiltonian form \cite{PortHammelVanDer,PortHammilBettie}, the real skew-symmetric
structure matrix is unchanged and a real symmetric positive semi-definite
resistive term $E_m\bphi^{-1}E_m^{T}$ of rank at most $p$ is added at the
truncation.

Eliminating $\hat U^{\bphi}=\frac{1}{\sqrt{s}}
(\Gamma^{\bphi}_m)^{-1}J_m^{T}U^{\bphi}$ recovers the
Kre\u{\i}n--Nudelman operator of the main text,
\be\label{eq:Tphi}
\hat T^{\bphi}_m(s)
=\widehat{K}_m^{-T}J_m
\big(\Gamma^{\bphi}_m(s)\big)^{-1}
J_m^{T}\widehat{K}_m^{-1},
\qquad
\big(\hat T^{\bphi}_m(s)+sI\big)X^{\bphi}=E_1,
\ee
with $\hat\blF^{\bphi}_m(s)=E_1^{T}X^{\bphi}=E_1^{T}U^{\bphi}
=\blU^{\bphi}_1$. Because $\Gamma^{\bphi}_m$ is block diagonal and
differs from $\Gamma_m$ only in its last block, $\hat T^{\bphi}_m$
differs from $T_m$ only in its last diagonal block and the Sherman–Morrison–Woodbury identity
\be\label{eq:woodbury}
\left(\gam_m+(\sqrt{s}\bphi)^{-1}\right)^{-1}
=\gam_m^{-1}-\gam_m^{-1}\left(\gam_m^{-1}+\sqrt{s}\bphi\right)^{-1}\gam_m^{-1}
\ee
shows that this block is exactly $\hat\Alpha^{\bphi}_m=\Alpha_m-\delta\Alpha_m$
of~\eqref{eq:hatal}. Thus~\eqref{eq:Ph_phi} is a first-order realization
of~\eqref{eq:KreinNudelman}.

Since $\Gamma^{\bphi}_m$ is complex symmetric we split it into
Hermitian parts. By the branch definition of $\sqrt(s)$~\eqref{eq:branch},
\[
\tfrac{1}{2}\big(\Gamma^{\bphi}_m
+(\Gamma^{\bphi}_m)^{H}\big)=\Gamma_m \text{ and }
\tfrac{1}{2}\big((\Gamma^{\bphi}_m)^{H}
-\Gamma^{\bphi}_m\big)
=\frac{\sqrt{s}}{|s|}E_m\bphi^{-1}E_m^{T},\] 
so that with
\[
M^{-1}\pm M^{-H}=M^{-H}(M^{H}\pm M)M^{-1}
\]
we obtain
\be\label{eq:GamSplit}
\Re\big[(\Gamma^{\bphi}_m)^{-1}\big]
=(\Gamma^{\bphi}_m)^{-H}\Gamma_m
(\Gamma^{\bphi}_m)^{-1}>0,
\quad
\Im\big[(\Gamma^{\bphi}_m)^{-1}\big]
=\frac{(\Gamma^{\bphi}_m)^{-H}E_m\bphi^{-1}E_m^{T}
(\Gamma^{\bphi}_m)^{-1}}{\sqrt{|s|}}\geq0 .
\ee
As $\widehat{K}_m$ and $J_m$ are real, the same splitting carries
over to~\eqref{eq:Tphi},
\be\label{eq:TSplit}
\Re\,\hat T^{\bphi}_m
=\widehat{K}_m^{-T}J_m
\Re\big[(\Gamma^{\bphi}_m)^{-1}\big]J_m^{T}
\widehat{K}_m^{-1}>0,
\qquad
\Im\,\hat T^{\bphi}_m
=\widehat{K}_m^{-T}J_m
\Im\big[(\Gamma^{\bphi}_m)^{-1}\big]J_m^{T}
\widehat{K}_m^{-1}\geq0,
\ee
where $\Re\,\hat T^{\bphi}_m\to T_m$ and $\Im\,\hat T^{\bphi}_m\to0$ as
$\bphi^{-1}\to0$. The dissipative part $\Im\,\hat T^{\bphi}_m$ has rank at most
$p$ and is supported at the truncation.

\subsection{Active and reactive power}

Repeating the computation for the earlier section~\ref{sec:portDEnergy} for~\eqref{eq:Ph_phi}, the
only change is in the exchange term: now
$J_m^{T}U^{\bphi}=\sqrt{s}\,\Gamma_m\hat U^{\bphi}
+E_m\bphi^{-1}E_m^{T}\hat U^{\bphi}$, so
\be\label{eq:cphi}
\mathrm{tr}\big((U^{\bphi})^{H}J_m\hat U^{\bphi}\big)
=-\sqrt{s}\,\|\hat U^{\bphi}\|^{2}_{\Gamma_m}
+\|\hat\blU^{\bphi}_m\|^{2}_{\bphi^{-1}} .
\ee
Writing $2W^{\bphi}_e=\|U^{\bphi}\|^{2}_{\widehat{\Gamma}_m}$,
$2W^{\bphi}_m=\|\hat U^{\bphi}\|^{2}_{\Gamma_m}$ and
$P^{\bphi}=\|\hat\blU^{\bphi}_m\|^{2}_{\bphi^{-1}}$, all real, and using
$\overline{\sqrt{s}}=-\sqrt{s}$, the balance~\eqref{eq:balance0} can be summarized as
\be\label{eq:master}
\frac{1}{\sqrt{s}}\,\mathrm{tr}\!\left(\big(\hat\blF^{\bphi}_m(s)\big)^{H}\right)
=P^{\bphi}(s)+\sqrt{s}\,\Big(2W^{\bphi}_e(s)-2W^{\bphi}_m(s)\Big).
\ee
Because $\sqrt{s}$ is purely imaginary, \eqref{eq:master} separates the
\emph{active} (dissipated) power from the \emph{reactive} power,
\be\label{eq:active}
\Re\,\mathrm{tr}\!\left(
\frac{\big(\hat\blF^{\bphi}_m(s)\big)^{H}}{\sqrt{s}}\right)
=P^{\bphi}(s)
=\big\|\hat\blU^{\bphi}_m\big\|^{2}_{\bphi^{-1}}\ \ge 0,
\ee
\be\label{eq:reactive}
\Im\,\mathrm{tr}\!\left(
\frac{\big(\hat\blF^{\bphi}_m(s)\big)^{H}}{\sqrt{s}}\right)
=\sqrt{|s|}\,\Big(2W^{\bphi}_e(s)-2W^{\bphi}_m(s)\Big),
\ee
and~\eqref{eq:balance0final} is recovered for $\bphi^{-1}=0$. All three
quantities are computable from $X^{\bphi}$ substituting
$\sqrt{s}\,\hat U^{\bphi}=(\Gamma^{\bphi}_m)^{-1}J_m^{T}U^{\bphi}$
into the definitions and using~\eqref{eq:GamSplit}--\eqref{eq:TSplit},
\be\label{eq:normforms}
2W^{\bphi}_e=\|X^{\bphi}\|^{2}_{F},
\qquad
2W^{\bphi}_m=\frac{1}{|s|}\,\|X^{\bphi}\|^{2}_{\Re\,\hat T^{\bphi}_m(s)},
\qquad
P^{\bphi}=\frac{1}{\sqrt{|s|}}\,\|X^{\bphi}\|^{2}_{\Im\,\hat T^{\bphi}_m(s)} .
\ee
In particular the electric energy form is unaffected by the termination, the
magnetic energy sees only $\Re\,\hat T^{\bphi}_m$, and the dissipation is
carried entirely by $\Im\,\hat T^{\bphi}_m$. The total stored energy is
\be\label{eq:EtotPhi}
E^{\bphi}_{\rm tot}(s)=2\left(W^{\bphi}_e+W^{\bphi}_m\right)
=\|X^{\bphi}\|^{2}_{F}
+\frac{1}{|s|}\,\|X^{\bphi}\|^{2}_{\Re\,\hat T^{\bphi}_m(s)} .
\ee
Moreover, no analogue of~\eqref{eq:foster}, is available once $\bphi>0$. Since
$X^{\bphi}$ is a rational function of $\sqrt{s}$ with real coefficients, the
branch~\eqref{eq:branch} gives
$\overline{X^{\bphi}(\sqrt{s})}=X^{\bphi}(-\sqrt{s})$, so the electric energy
\be\label{eq:twopoint}
\|X^{\bphi}\|^{2}_{F}
=\big(X^{\bphi}(-\sqrt{s})\big)^{T}X^{\bphi}(\sqrt{s})
\ee
has the solution at $+\sqrt{s}$ and its conjucate, i.e. the one at $-\sqrt{s}$, whereas
differentiation of $\hat\blF^{\bphi}_m$ produces only the unconjugated form
$(X^{\bphi})^{T}(\sqrt{s})X^{\bphi}(\sqrt{s})$. The two coincide in the lossless case, where $X$ is
real, but not otherwise. The magnetic energy, by contrast, does follow from the
transfer function, substituting $\hat T^{\bphi}_mX^{\bphi}=E_1-sX^{\bphi}$ into
$\|X^{\bphi}\|^{2}_{\Re\hat T^{\bphi}_m}
=\Re\,\mathrm{tr}\big((X^{\bphi})^{H}\hat T^{\bphi}_mX^{\bphi}\big)$ gives
\be\label{eq:WmTF}
2W^{\bphi}_m(s)=\|X^{\bphi}\|^{2}_{F}
+\frac{\Re\,\mathrm{tr}\,\hat\blF^{\bphi}_m(s)}{|s|},
\ee
so that $\|X^{\bphi}\|^{2}_{F}$ is the only quantity in~\eqref{eq:normforms}
that must be computed from the state.

\subsection{The objective}

Maximizing the dissipation of the truncated string relative to the energy it
stores gives the objective used in section~\ref{sec:HermitePade},
\be\label{eq:OptimizeA}
\bphi^{\star}
=\arg\max_{\bphi\geq0}\ \int_{\cal S}
\frac{\Re\,\mathrm{tr}\!\left(\dfrac{\big(\hat\blF^{\bphi}_m(s)\big)^{H}}
{\sqrt{s}}\right)}
{\|X^{\bphi}\|^{2}_{F}
+\dfrac{1}{|s|}\,\|X^{\bphi}\|^{2}_{\Re\,\hat T^{\bphi}_m(s)}}\ ds ,
\ee
i.e.\ the active power absorbed by the Kre\u{\i}n--Nudelman port, normalized by
the total stored energy. Both the numerator and the denominator are real and
non-negative by~\eqref{eq:active} and~\eqref{eq:EtotPhi}.

\begin{remark}\label{rem:limits}
The objective vanishes at both quadrature limits. For $\bphi\to\infty$ we have the block Gau{\ss} rule and for $\bphi\to0$ the block Gau{\ss}--Radau rule, both without absorbed power. The maximizer
is therefore attained in the interior.
\end{remark}
}

\end{appendices}

\bibliography{sn-bibliography}

@book{PortHammelVanDer,
  title     = {Port-{H}amiltonian Systems Theory: An Introductory Overview},
  author    = {{van der Schaft}, Arjan and Jeltsema, Dimitri},
  year      = {2014},
  isbn      = {978-1-60198-786-0},
  series    = {Foundations and Trends in Systems and Control},
publisher = {Now Publishers Inc.},
address = {Hanover, MA, USA},
volume = {1},
  number    = {2-3}
}

@article{PortHammilBettie,
title = {Linear port-{H}amiltonian descriptor systems},
author = {Christopher Beattie and Volker Mehrmann and Hongguo Xu and Hans Zwart},
year = {2018},
month = {dec},
day = {1},
language = {English},
volume = {30},
journal = {Mathematics of Control, Signals, and Systems},
issn = {0932-4194},
publisher = {Springer},
number = {4},
}

@article{Krein1947,
  author = {Krein, M. G.},
  title = {The theory of extensions of semi-bounded {H}ermitian operators and its applications},
  journal = {Matematicheskii Sbornik},
  volume = {20},
  pages = {431--495},
  year = {1947},
  note = {Part I; Part II in vol. 21 (1947), pp. 365–404}
}

@article{Krein1952,
  author = {Krein, M. G.},
  title = {On a generalization of {S}tieltjes' investigations},
  journal = {Doklady Akademii Nauk SSSR},
  volume = {86},
  number = {6},
  pages = {881--884},
  year = {1952}
}

@article{Krein1967,
  author = {Krein, M. G.},
  title = {The description of solutions of the truncated moment problem},
  journal = {Matematicheskie Issledovaniya},
  volume = {2},
  number = {2},
  pages = {114--132},
  year = {1967}
}

@article{LagariasOpt,
author = {Lagarias, Jeffrey C. and Reeds, James A. and Wright, Margaret H. and Wright, Paul E.},
title = {Convergence Properties of the {N}elder--{M}ead Simplex Method in Low Dimensions},
journal = {SIAM Journal on Optimization},
volume = {9},
number = {1},
pages = {112-147},
year = {1998},
doi = {10.1137/S1052623496303470}}

@book{KreinNudelman1973,
  author = {Krein, M. G. and Nudelman, A. A.},
  title = {The {M}arkov Moment Problem and Extremal Problems},
  publisher = {Nauka},
  address = {Moscow},
  year = {1973},
  language = {Russian},
  note = {English translation: Translation of Mathematical Monographs, AMS, vol. 50, 1977}
}

@article{KreinNudelman1989,
  author = {Krein, M. G. and Nudelman, A. A.},
  title = {Some spectral properties of a nonhomogeneous string with a dissipative boundary condition},
  journal = {Journal of Operator Theory},
  volume = {22},
  number = {2},
  pages = {369--395},
  year = {1989}
}

@article{RRT16,
title = {New block quadrature rules for the approximation of matrix functions},
journal = {Linear Algebra and its Applications},
volume = {502},
pages = {299-326},
year = {2016},
issn = {0024-3795},
author = {Lothar Reichel and Giuseppe Rodriguez and Tunan Tang}
}

@article{Aptekarev,
  title={{H}ermite-{P}ad{\'e} Approximants for a Pair of {C}auchy Transforms with Overlapping Symmetric Supports},
  author={Aptekarev, Alexander I and Van Assche, Walter and Yattselev, Maxim L},
  journal={Communications on Pure and Applied Mathematics},
  volume={70},
  number={3},
  pages={444--510},
  year={2017},
  publisher={Wiley Online Library}
}

@misc{Suetin,
      title={Rational {H}ermite-{P}ad\'e Approximants vs {P}ad\'e Approximants. Numerical Results}, 
      author={Egor O. Dobrolyubov and Nikolay R. Ikonomov and Leonid A. Knizhnerman and Sergey P. Suetin},
      year={2023},
      eprint={2306.07063},
      archivePrefix={arXiv},
      primaryClass={math.CV},
      url={https://arxiv.org/abs/2306.07063}, 
}

@book{Zworski,
  title={Mathematical Theory of Scattering Resonances},
  author={Dyatlov, Semyon and Zworski, Maciej},
  year={2019},
  publisher={American Mathematical Society},
  address={Providence, RI},
  isbn={978-1-4704-4366-5},
  series={Graduate Studies in Mathematics}
}

@phdthesis{Lun18,
  title        = {A New Block {Krylov} Subspace Framework with
Applications to Functions of Matrices Acting on Multiple Vectors},
  author       = {Kathryn Lund},
  year         = {2018},
  month        = {May},
  address      = {Philadelphia, Pennsylvania, USA},
  school       = {Department of Mathematics, Temple University
and Fakult\"at Mathematik und Naturwissenschaften der Bergischen
Universit\"at Wuppertal},
  type         = {PhD thesis}
}

@book{GM10,
  author    = {Golub, Gene H. and Meurant, G{\'e}rard},
  title     = {Matrices, Moments and Quadrature with Applications},
  publisher = {Princeton University Press},
  address   = {Princeton, NJ},
  year      = {2010},
  isbn      = {978-0-691-14341-5}
}

@misc{Crameri2023ScientificColormapsPackage,
  author       = {Crameri, Fabio},
  title        = {Scientific colour maps},
  month        = {oct},
  year         = {2023},
  publisher    = {Zenodo},
  version      = {v8.0.1},
  doi          = {10.5281/zenodo.8409685},
  url          = {https://doi.org/10.5281/zenodo.8409685}
}

@incollection{Golub1977,
  author    = {Golub, Gene H. and Underwood, Richard},
  title     = {The Block {L}anczos Method for Computing Eigenvalues},
  booktitle = {Mathematical Software {III}},
  editor    = {Rice, John R.},
  year      = {1977},
  pages     = {361--377},
  publisher = {Academic Press},
  address   = {New York}
}

@article{idkGrids,
author = {Ingerman, David and Druskin, Vladimir and Knizhnerman, Leonid},
title = {Optimal finite difference grids and rational approximations of the square root I. {Elliptic} problems},
journal = {Communications on Pure and Applied Mathematics},
volume = {53},
number = {8},
pages = {1039-1066},
year = {2000}
}

@Article{Meurant2023,
author={Meurant, G{\'e}rard
and Tich{\'y}, Petr},
title={The behavior of the {Gauss-Radau} upper bound of the error norm in {CG}},
journal={Numerical Algorithms},
year={2023},
month={Oct},
day={01},
volume={94},
number={2},
pages={847-876},
issn={1572-9265}
}

@article{Greenbaum1989,
title = {Behavior of slightly perturbed {Lanczos} and conjugate-gradient recurrences},
journal = {Linear Algebra and its Applications},
volume = {113},
pages = {7-63},
year = {1989},
issn = {0024-3795},
author = {Anne Greenbaum}
}

@article{lot2013,
author = {Caterina Fenu and David Martin and Reichel, Lothar and Rodriguez, Giuseppe},
title = {Block {Gauss} and Anti-{Gauss} Quadrature with Application to Networks},
journal = {SIAM Journal on Matrix Analysis and Applications},
volume = {34},
number = {4},
pages = {1655-1684},
year = {2013}
}

@article{Zimmerling_KreinNudel,
year = {2023},
month = {dec},
publisher = {IOP Publishing},
volume = {40},
number = {2},
pages = {025002},
author = {Zimmerling, Jörn and Druskin, Vladimir and Guddati, Murthy and Cherkaev, Elena and Remis, Rob},
title = {Solving inverse scattering problems via reduced-order model embedding procedures},
journal = {Inverse Problems},
}

@article{lot2008,
title = {Matrices, moments, and rational quadrature},
journal = {Linear Algebra and its Applications},
volume = {429},
number = {10},
pages = {2540-2554},
year = {2008},
note = {Special Issue in honor of Richard S. Varga},
issn = {0024-3795},
author = {Guillermo {López Lagomasino} and Lothar Reichel and Lena Wunderlich}
}

@inproceedings{amsel2023nearoptimal,
      author={Amsel, Noah and Chen, Tyler and Greenbaum, Anne and Musco, Cameron and Musco, Chris},
    title={Near-Optimal Approximation of Matrix Functions by the {Lanczos} Method},
    booktitle = {Proceedings of the Thirty-Eighth Annual Conference on Neural Information Processing},
    year = {2024}
}

@book{MeurantBook,
author = {Meurant, Gérard and Tichý, Petr},
title = {Error Norm Estimation in the Conjugate Gradient Algorithm},
publisher = {Society for Industrial and Applied Mathematics},
year = {2024},
address = {Philadelphia, PA},
edition   = {}
}

@article{OLeary1980,
title = {The block conjugate gradient algorithm and related methods},
journal = {Linear Algebra and its Applications},
volume = {29},
pages = {293-322},
year = {1980},
note = {Special Volume Dedicated to Alson S. Householder},
issn = {0024-3795},
author = {Dianne P. O'Leary}
}

@article{Druskin2016,
author = {Druskin, Vladimir and Mamonov, Alexander V. and Thaler, Andrew E. and Zaslavsky, Mikhail},
title = {Direct, Nonlinear Inversion Algorithm for Hyperbolic Problems via Projection-Based Model Reduction},
journal = {SIAM Journal on Imaging Sciences},
volume = {9},
number = {2},
pages = {684-747},
year = {2016},
doi = {10.1137/15M1039432}
}

@article{Knizhnerman1996TheSL,
  title={The simple {Lanczos} procedure: estimates of the error of the {G}auss quadrature formula and their applications},
  author={Leonid A. Knizhnerman},
  journal={Computational Mathematics and Mathematical Physics},
  year={1996},
  volume={36},
  pages={1481-1492}
}

@article{druskin1988spectral,
  title={A spectral semi-discrete method for numerical solution of {3D} non-stationary problems in electrical prospecting},
  author={Vladimir Druskin and Leonid Knizhnerman},
  journal={Phys. Sol. Earth},
  volume={24},
  pages={641--648},
  year={1988}
}

@article{druskin2016near,
  title={Near-optimal perfectly matched layers for indefinite {Helmholtz} problems},
  author={Druskin, Vladimir and G\"uttel, Stefan and Knizhnerman, Leonid},
  journal={SIAM Review},
  volume={58},
  number={1},
  pages={90--116},
  year={2016},
  publisher={Society for Industrial and Applied Mathematics}
}

@inproceedings{saputra2024adaptive,
  title={Adaptive Multidimensional Inversion for Borehole Ultra-Deep Azimuthal Resistivity},
  author={Saputra, Wardana and Ambia, Joaquin and Torres-Verd{\'\i}n, Carlos and Davydycheva, Sofia and Druskin, Vladimir and Zimmerling, J{\"o}rn},
  booktitle={SPWLA Annual Logging Symposium},
  pages={D041S013R005},
  year={2024},
  organization={SPWLA}
}

@article{ZaslavskySfraction,
author = {Druskin, Vladimir and Mamonov, Alexander V. and Zaslavsky, Mikhail},
title = {Multiscale {S}-Fraction Reduced-Order Models for Massive Wavefield Simulations},
journal = {Multiscale Modeling \& Simulation},
volume = {15},
number = {1},
pages = {445-475},
year = {2017}
}

@article{druskin1989two,
  title={Two polynomial methods of calculating functions of symmetric matrices},
  author={Druskin, Vladimir L and Knizhnerman, Leonid A},
  journal={USSR Computational Mathematics and Mathematical Physics},
  volume={29},
  number={6},
  pages={112--121},
  year={1989},
  publisher={No longer published by Elsevier}
}

@article{zimmerling2025monotonicity,
  title={Monotonicity, Bounds and Acceleration of Block {G}auss and {G}auss--{R}adau Quadrature for Computing {$B^T \phi(A) B$}},
  author={Zimmerling, J{\"o}rn and Druskin, Vladimir and Simoncini, Valeria},
  journal={Journal of Scientific Computing},
  volume={103},
  number={1},
  pages={5},
  year={2025},
  publisher={Springer US New York}
}

@article{zimmerling2025targeted,
  title={A Targeted Quadrature Framework for Simulating Large-Scale {3D} Anisotropic Electromagnetic Measurements},
  author={Zimmerling, J{\"o}rn and Druskin, Vladimir and Davydycheva, Sofia and Saputra, Wardana and Torres-Verd{\'\i}n, Carlos and Antonsen, Frank and Lotsberg, Jon K{\aa}re and Rabinovich, Michael},
  journal={arXiv preprint arXiv:2511.17999},
  year={2025}
}

@article{UbaruChenSaad2017SLQ,
  author  = {Ubaru, Shashanka and Chen, Jie and Saad, Yousef},
  title   = {Fast Estimation of {$\mathrm{tr}(f(A))$} via Stochastic {Lanczos} Quadrature},
  journal = {SIAM Journal on Matrix Analysis and Applications},
  year    = {2017},
  doi     = {10.1137/16M1104974},
}

@article{AvronToledo2011RandomizedTrace,
  author  = {Avron, Haim and Toledo, Sivan},
  title   = {Randomized Algorithms for Estimating the Trace of an Implicit Symmetric Positive Semi-Definite Matrix},
  journal = {Journal of the ACM},
  volume  = {58},
  number  = {2},
  year    = {2011},
  doi     = {10.1145/1944345.1944349}
}

@article{ColbrookHorningTownsend2021SIAMRevSpectralMeasures,
  author  = {Colbrook, Matthew J. and Horning, Andrew and Townsend, Alex},
  title   = {Computing Spectral Measures of Self-Adjoint Operators},
  journal = {SIAM Review},
  volume  = {63},
  number  = {3},
  year    = {2021},
  pages   = {489--524},
  doi     = {10.1137/20M1333985}
}

@article{YiMassattHorningLuskinPixleyKaye2025DeltaChebyshev,
  author  = {Yi, Jinjing and Massatt, Daniel and Horning, Andrew and Luskin, Mitchell and Pixley, J. H. and Kaye, Jason},
  title   = {A High-Order Regularized Delta-{Chebyshev} Method for Computing Spectral Densities},
  journal = {arXiv},
  year    = {2025},
  eprint  = {2512.03149},
  archivePrefix = {arXiv},
  primaryClass  = {math.NA},
  doi     = {10.48550/arXiv.2512.03149}
}

@incollection{BeerPettifor1984NATOASI,
  author    = {Beer, N. and Pettifor, D. G.},
  title     = {The Recursion Method and the Estimation of Local Densities of States},
  booktitle = {The Electronic Structure of Complex Systems},
  editor    = {Phariseau, P. and Temmerman, W. M.},
  series    = {NATO ASI Series},
  volume    = {113},
  pages     = {769--777},
  publisher = {Springer},
  address   = {Boston, MA},
  year      = {1984},
  doi       = {10.1007/978-1-4613-2405-8_14}
}

@article{HaydockHeineKelly1972,
  author  = {Haydock, Roger and Heine, Volker and Kelly, M. J.},
  title   = {Electronic structure based on the local atomic environment},
  journal = {Journal of Physics C: Solid State Physics},
  volume  = {5},
  year    = {1972},
  pages   = {2845--2858}
}

@article{Haydock1980Recursion,
  author    = {Haydock, Roger},
  title     = {The recursion method},
  journal   = {Solid State Physics},
  volume    = {35},
  year      = {1980},
  pages     = {215--294},
  publisher = {Academic Press}
}

@incollection{BeerPettifor1984Terminator,
  author    = {Beer, N. and Pettifor, D. G.},
  title     = {Terminators for continued-fraction {Green's} functions in the recursion method},
  booktitle = {Electronic Structure of Complex Systems},
  editor    = {Phariseau, P. and Temmerman, W. M.},
  publisher = {Plenum Press},
  address   = {New York},
  year      = {1984},
  pages     = {769--777}
}

@article{LuchiniNex1987Stitching,
  author  = {Luchini, M. U. and Nex, C. M. M.},
  title   = {A new procedure for appending terminators in the recursion method},
  journal = {Journal of Physics C: Solid State Physics},
  volume  = {20},
  year    = {1987},
  pages   = {3125--3130}
}

@article{PinnaLuntvonKeyserlingk2025Stitching,
  title = {Approximation theory for {G}reen's functions via the {L}anczos algorithm},
  author = {Pinna, Gabriele and Lunt, Oliver and von Keyserlingk, Curt},
  journal = {Phys. Rev. B},
  volume = {112},
  issue = {5},
  pages = {054435},
  numpages = {25},
  year = {2025},
  month = {Aug},
  publisher = {American Physical Society}
}

@book{PettiforWeaire1985,
  editor    = {Pettifor, D. G. and Weaire, D. L.},
  title     = {The Recursion Method and Its Applications},
  series    = {Springer Series in Solid-State Sciences},
  volume    = {58},
  publisher = {Springer-Verlag},
  address   = {Berlin, Heidelberg, New York, Tokyo},
  year      = {1985},
  isbn      = {3-540-15173-7, 978-3-540-15173-9}
}

@article{PerssonChenMusco2025,
title = {Randomized block-{Krylov} subspace methods for low-rank approximation of matrix functions},
journal = {Linear Algebra and its Applications},
volume = {741},
pages = {32-65},
year = {2026},
issn = {0024-3795},
author = {David Persson and Tyler Chen and Christopher Musco}
}

@article{Reichel2020SimpleSC,
  title={Simple stopping criteria for the {LSQR} method applied to discrete ill-posed problems},
  author={Lothar Reichel and Hassane Sadok and Wei-Hong Zhang},
  journal={Numerical Algorithms},
  year={2020},
  volume={84},
  pages={1381 - 1395},
  url={https://api.semanticscholar.org/CorpusID:209429761}
}

@article{Fasondini2019QuadraticPade,
  author  = {Fasondini, M. and Hale, N. and Spoerer, R. and Weideman, J. A. C.},
  title   = {Quadratic {Pad{\'e}} Approximation: Numerical Aspects and Applications},
  journal = {Computer Research and Modeling},
  volume  = {11},
  number  = {6},
  pages   = {1017--1031},
  year    = {2019}
}

@article{MayerNuttallTong1984QuadraticPade,
  author  = {Mayer, I. L. and Nuttall, J. and Tong, B. Y.},
  title   = {Quadratic {Pad{\'e}} Approximant Method for Calculating Densities of States},
  journal = {Physical Review B},
  volume  = {29},
  number  = {12},
  pages   = {7102},
  year    = {1984}
}

@article{MayerTong1985QuadraticPade,
  author  = {Mayer, I. L. and Tong, B. Y.},
  title   = {The Quadratic {Pad{\'e}} Approximant Method and Its Application for Calculating Densities of States},
  journal = {Journal of Physics C: Solid State Physics},
  volume  = {18},
  number  = {17},
  pages   = {3297--3318},
  year    = {1985}
}

@inproceedings{ChenEpperlyMeyerMuscoRao2026,
  author    = {Chen, Tyler and Epperly, Ethan N. and Meyer, Raphael A.
               and Musco, Christopher and Rao, Akash},
  title     = {Does Block Size Matter in Randomized Block {Krylov}
               Low-Rank Approximation?},
  booktitle = {Proceedings of the 2026 Annual ACM-SIAM Symposium
               on Discrete Algorithms (SODA)},
  year      = {2026},
  note      = {arXiv:2508.06486}
}

@article{ColbrookHansen2023,
  author  = {Colbrook, Matthew J. and Hansen, Anders C.},
  title   = {The Foundations of Spectral Computations
             via the Solvability Complexity Index Hierarchy},
  journal = {Journal of the European Mathematical Society},
  volume  = {25},
  number  = {12},
  pages   = {4639--4728},
  year    = {2023}
}

@article{BeckermannGuttel2010,
  author  = {Beckermann, Bernhard and G{\"u}ttel, Stefan},
  title   = {On the Convergence of Rational {Ritz} Values},
  journal = {SIAM Journal on Matrix Analysis and Applications},
  volume  = {31},
  number  = {4},
  pages   = {1740--1774},
  year    = {2010},
  doi     = {10.1137/090755412}
}

@article{Kuijlaars2000,
  author  = {Kuijlaars, Arno B. J.},
  title   = {Which Eigenvalues Are Found by the {Lanczos} Method?},
  journal = {SIAM Journal on Matrix Analysis and Applications},
  volume  = {22},
  number  = {1},
  pages   = {306--321},
  year    = {2000}
}

@book{GantmacherKrein2002,
  author    = {Gantmacher, F. R. and Krein, M. G.},
  title     = {Oscillation Matrices and Kernels and Small Vibrations of Mechanical Systems},
  publisher = {American Mathematical Society},
  address   = {Providence, RI},
  year      = {2002},
  series    = {AMS Chelsea Publishing},
  note      = {Revised edition. See Supplement II: ``On a Remarkable Problem for a String with Beads and Continued Fractions of Stieltjes''}
}

@book{LiesenStrakos2012,
  author    = {Liesen, J{\"o}rg and Strako{\v{s}}, Zden{\v{e}}k},
  title     = {{Krylov} Subspace Methods: Principles and Analysis},
  publisher = {Oxford University Press},
  address   = {Oxford},
  year      = {2012}
}

\end{document}